     \def\section{\@startsection{section}{1}%
     \z@{.7\linespacing\@plus\linespacing}{.5\linespacing}%
     {\bfseries%\normalfont\scshape
     \centering
     }}
     \def\@secnumfont{\bfseries}
\newtheorem{teo}{Theorem}[section]
\newtheorem{defi}{Definition}[section]
\newtheorem{lemma}{Lemma}[section]
\numberwithin{equation}{section}
\begin{document}

\title[The Boundedness of General Alternative  Gaussian Singular Integrals ] {The Boundedness of General  Alternative Gaussian Singular Integrals with respect to the Gaussian measure}

\author{Eduard Navas}
\address{Departamento de Matem\'aticas, Universidad Nacional Experimental Francisco de Miranda, Punto Fijo, Venezuela.}
\email{[Eduard Navas]enavas@correo.unefm.edu.ve}
  \author{Ebner Pineda}
\address{Escuela Superior Polit\'ecnica del Litoral. ESPOL, FCNM, Campus Gustavo Galindo Km. 30.5 V\'ia Perimetral, P.O. Box 09-01-5863, Guayaquil, ECUADOR.}
\email{epineda@espol.edu.ec}
\author[Wilfredo~O.~Urbina]{Wilfredo~O.~Urbina*}
\thanks{* Corresponding author}
\address{Department of Mathematics, Actuarial Sciences  and Economy, Roosevelt University, Chicago, IL,
   60605, USA.}
\email{wurbinaromero@roosevelt.edu}

\subjclass[2010] {Primary 42B25, 42B35 ; Secondary 46E30, 47G10}

\keywords{Gaussian harmonic analysis, Gaussian Lebesgue spaces, Ornstein-Uhlenbeck semigroup, Gaussian singular integrals.}

\begin{abstract}
In this paper we introduce a new class of Gaussian singular integrals, the general alternative Gaussian singular integrals and study the boundedness of them in  $L^p(\gamma_d)$, $ 1 < p < \infty$ and its weak $(1,1)$ boundedness with respect to the Gaussian measure following \cite{pe}  and \cite{aimarforzaniscot}, respectively.
\end{abstract}

\maketitle

\section{Introduction and Preliminaries}

Singular integrals are some of the most important operators in classical harmonic analysis.
They first appear naturally in the proof of the $L^p(\mathbb{T}) $ convergence of Fourier series, $1 < p <\infty$; where the notion  of the {\em conjugated function} is needed\footnote{For  a detailed study of this problem see for instance E. Stein \cite[Chapter II, III]{st1}, J. Duoandikoetxea \cite [Chapter 4, 5]{duo}, L. Grafakos \cite[Chapter 4]{grafak} or A. Torchinski \cite[Chapter XI]{tor}.}
$$\tilde{f}(x) =\text{p.v.} \frac{1}{\pi}\int_{-\pi}^{\pi} \frac{f(x-y)}{2\tan \frac{y}{2}} dy =  \lim_{\varepsilon \to 0} \frac{1}{\pi}\int_{\pi>|y| >\varepsilon} \frac{f(x-y)}{2\tan \frac{y}{2}} dy.$$
This notion was extended to the non-periodic case with the definition of the {\em Hilbert transform,}
$$H f(x) =\text{p.v.} \frac{1}{\pi} \int_{-\infty}^{\infty} \frac{f(x-y)}{ y} dy =   \lim_{\varepsilon \to 0} \frac{1}{\pi} \int_{|y| > \varepsilon} \frac{f(x-y)}{ y} dy  ;$$
and then to $\mathbb{R}^d,$ with the notion of {\em Riesz transform}; see E. Stein \cite[Chap III, $\S 1$]{st1},
\begin{eqnarray}\label{Rieszdef}
\nonumber R_j f(x) &=& \text{p.v.} \; C_d \int_{\mathbb{R}^d} \frac{y_i}{|y|^{d+1}} f(x-y) dy \\
&=& \lim_{\varepsilon \to 0} C_d \int_{|y|> \varepsilon} \frac{y_j}{|y|^{d+1}} f(x-y) dy,
\end{eqnarray}
for $j= 1, \cdots, d ,$  $ f \in L^p(\mathbb{R}^d)$ with $C_d =\frac{\Gamma(\frac{d+1}{2})}{\pi^{(d+1)/2}}.$ Taking Fourier transform, we get
$$
 \widehat{(R_j f)}(\zeta) =  i \frac{\zeta_j}{|\zeta|} \hat{f}(\zeta),
$$
and thus $R_j f$ is a classical multiplier operator, with multiplier $m(y) =C_d \; i \,\frac{y_j}{|y|},$ and hence
\begin{equation}\label{ClasRieszDef}
R_j  = \frac{\partial }{\partial x_j} (-\Delta)^{-1/2},
\end{equation}
where $\Delta = \sum_{i=1}^d  \frac{\partial^2}{\partial x_i^2}$ is the Laplacian operator and $(-\Delta)^{-1/2}$ is the (classical) {\em Riesz potential} of order $1/2$.

This was later generalized to the famous {\em Calder\'on-Zygmund class} of  singular integrals:
\begin{defi} We will say that a $C^1$ function $K(x,y),$  defined off the diagonal of $\mathbb{R}^d\times \mathbb{R}^d,$ i.e. $ x \neq y$ is a Calder\'on-Zygmund kernel provided that the following conditions are satisfied:
\begin{enumerate}\label{GradC-Z}
\item [i)] $|K(x,y)|\leq \frac{C}{|x-y|^d},$
\item[ii)] $|\partial_y K(x,y)|\leq \frac{C}{|x-y|^{d+1}}.$
\end{enumerate}
associated with $K$ we define the operator $T$ by means of the formula
\begin{eqnarray*}
Tf(x) =p. v. \int_{\mathbb{R}^d} K(x,y) f(y) d =\lim\limits_{\varepsilon \to 0} \int_{|x-y| > \varepsilon} K(x,y) f(y) dy.
\end{eqnarray*}
with $f \in C_0^\infty(\mathbb{R}^d).$ We say that  $T$ is a Calder\'on-Zygmund operator if $T$ admits a continuous
extension to $L^2(\mathbb{R}^d).$
\end{defi}

For more details on this see, E. Stein \cite{st1}, \cite{duo} or \cite{grafak}.\\

The {\em Ornstein-Uhlenbeck operator} in ${\mathbb R}^d$ is a second order differential operator defined as
\begin{equation}\label{OUop}
 L = \frac{1}{2}\Delta_x - \langle x, \nabla_x \rangle= \sum_{i=1}^d \Big[\frac{1}{2} \frac{\partial^2}{\partial x_i^2} - x_i \frac{\partial }{\partial x_i}\Big],
\end{equation}
where  $\nabla_x = ( \frac{\partial}{\partial x_1},\frac{\partial}{\partial x_2},\ldots,
\frac{\partial}{\partial x_d})$ is the gradient, and $\Delta_x$ is the Laplace operator defined on the space of test functions $C_0^\infty(\mathbb{R}^d)$ of smooth functions with compact support on ${\mathbb R}^d$.

 The  Hermite polynomials in $d$-variables, $\{{\vec H}_{\nu}\}_\nu$ are eigenfunctions of $L$ with corresponding eigenvalues $\lambda_{\nu}=  - |\nu| = -  \sum_{i=1}^d \nu_i,$ i.e.
\begin{equation}\label{HermiteEigen}
 L {\vec H}_{\nu} = \lambda_{\nu} {\vec H}_{\nu} = - |\nu|  {\vec H}_{\nu}.
\end{equation}

The operator $L$ has a self-adjoint extension to $L^2(\gamma_d),$  that will be also denoted as $L$, that is,
\begin{equation}
\int_{{\mathbb R}^d}Lf(x) g(x) \gamma_d(dx) =  \int_{{\mathbb R}^d} f(x) Lg(x) \gamma_d(dx),
\end{equation}
so $L$ is the natural {\em ``symmetric'' Laplacian} in the Gaussian context.

For  $i =1, 2, \cdots, d $  let us consider the differential operators
\begin{equation}\label{naturalderv}
 \partial_{\gamma}^i  = \frac{1}{\sqrt{2}}\frac{\partial}{\partial x_i} .
 \end{equation}
$\partial_{\gamma}^i $ is not symmetric nor antisymmetric in $L^2(\gamma_d).$ In fact, its
 formal $L^2(\gamma_d )$-adjoint\footnote{In $L^2(\mathbb{R}^d),$ $\frac{\partial}{\partial x_i}$ is antisymmetric, by integration by parts.} is,
\begin{equation}\label{GaussDerAdj}
(\partial_{\gamma}^{i})^* =- \frac{1}{\sqrt{2}} e^{x_i^2}\frac{\partial}{\partial x_i} (e^{-x_i^2} I)= \sqrt{2} x_i I - \frac{1}{\sqrt{2}}\frac{\partial}{\partial x_i} ,
\end{equation}
where $I$ is the identity, which can obtained simply  by integration by parts.
Observe that $ (\partial_\gamma^i)^{*}$ can be written as
\begin{equation}\label{altgrad}
 (\partial_\gamma^i)^{*} = - e^{|x|^2} (\partial_\gamma^i e^{-|x|^2}I).
\end{equation}

Moreover, it is easy to see that
\begin{equation}\label{OUdecomp}
(-L)= \sum_{i=1}^d(\partial_\gamma^i)^{*} \partial_\gamma^i.
\end{equation}

In analogy with the classical case (\ref{ClasRieszDef}), the Gaussian Riesz transforms in ${\mathbb R}^d$ are defined spectrally, for $\,1 \leq i \leq d,$ as
\begin{equation}\label{GaussRieszdef}
{\mathcal  R}_i=  \partial_\gamma^i (-L)^{-1/2},
\end{equation}
where $(-L)^{-1/2}$ the Gaussian Riesz potential of order $1/2.$
 The meaning of this is that for any multi-index $\nu$ such that $|\nu|>0,$ its action on the Hermite polynomial $\vec{H}_\nu$ is
 \begin{equation}\label{GaussRieszAct}
\mathcal{R}_i \vec{H}_\nu = \sqrt{\frac{2}{|\nu|}}\nu_j  \vec{H}_{\nu - \vec{e}_i}
\end{equation}
where $ \vec{e_i}$ is the unitary vector with zeros in all coordinates except for the $i$-th coordinate that is one if $\nu_i >0,$ and zero otherwise.

It can be proved, for details see \cite{urbina2019},  that the kernel of $\mathcal {R}_i $ is given by
 \begin{eqnarray}
 \mathcal{K}_i(x,y) &=& \frac{1}{\pi^{d/2}\Gamma(1/2)} \int_0^1 \left( \frac{1-r^2}{-\log r}\right)^{1/2}
\frac{y_i-rx_i}{(1-r^2)^{\frac{(d+3)}{2}}} e^{- \frac{|y-rx|^2}{1-r^2}} dr,
\end{eqnarray}
and therefore, we get the integral representation of $\mathcal{R}_i ,$
\begin{eqnarray}\label{integelRepRiez}
\mathcal{R}_i f(x) &=&  \text{p.v.} \int_{{\mathbb{R}}^d} \mathcal{K}_i(x,y)
 f(y) dy \\
 \nonumber&=&  \text{p.v.}\frac{1}{\pi^{d/2}\Gamma(1/2)}\int_{{\mathbb{R}}^d}
\Big( \int_0^1  \left( \frac{1-r^2}{-\log r}\right)^{1/2}
\frac{y_i-rx_i}{(1-r^2)^{\frac{(d+3)}{2}}} e^{- \frac{|y-rx|^2}{1-r^2}} \Big)\,dr
 f(y) dy.
\end{eqnarray}

In the Gaussian case, the  {\em higher order Gaussian Riesz transforms} are defined directly,
\begin{defi} For $\beta = (\beta_1, \beta_2, \cdots, \beta_d) \in {\mathbb N}_0^d$, the  higher order Riesz transforms are defined spectrally as
\begin{equation}\label{GaussHigherRieszdef}\index{higher order Gaussian Riesz transforms}
{\mathcal R}_{\beta}= \partial^{\beta}_\gamma(-L)^{-|\beta|/2},
\end{equation}
where $|\beta| = \sum_{j=1}^d \beta_j$ and $\partial^{\beta}_\gamma = \frac{1}{2^{|\beta|/2}} \partial^{\beta_1}_{x_1} \cdots \partial^{\beta_d}_{x_d}.$
The meaning of this is that for any multi-index $\nu$ such that $|\nu|>0,$ its action on the Hermite polynomial $\vec{H}_\nu$ is
 \begin{equation}\label{GaussianHigherRieszAct}
{\mathcal  R}_\beta  \vec{H}_\nu = \Big(\frac{2}{|\nu|} \Big)^{|\beta|/2} \Big[\prod_{i=1}^d \nu_i (\nu_i -1) \cdots (\nu_i - \beta_i+1)\Big] \vec{H}_{\nu - \beta}
\end{equation}
if $\beta_i \leq \nu_i$ for all $i =1, 2, \cdots, d,$ and zero otherwise.
 \end{defi}
Observe that (\ref{GaussianHigherRieszAct}) follows directly from the definition of ${\mathcal R}_{\beta}$ since $\vec{H}_\nu$ is eigenfunction of the Ornstein-Uhlenbeck operator $-L,$ with eigenvalue $|\nu|,$ and therefore, $ (-L)^{-|\beta|/2} \vec{H}_\nu =\frac{1}{ |\nu|^{|\beta|/2}} \vec{H}_\nu.$

The higher order Gaussian Riesz transforms have kernel given by
 \begin{eqnarray*}  \label{rieszsup}
{\mathcal K}_{\beta}(x,y)
&=& \frac{1}{\pi^{d/2}\Gamma(|\beta|/2)} \int_0^1 \Big(\frac{- \log r}{1-r^2}\Big)^{\frac{|\beta| - 2}{2}}
r^{|\beta|} {\vec H}_{\beta}\Big(\frac{y-rx}{\sqrt{1-r^2}}\Big)
 \frac {e^{- \frac{|y-rx|^2}{1-r^2}}}{(1-r^2)^{d/2+1}} \frac{dr}{r},
\end{eqnarray*}
for details see \cite{urbina2019}. Therefore,
\begin{eqnarray*}
{\mathcal R}_{\beta} f(x) &=& \text{p.v.} \int_{{\mathbb{R}}^d}
{\mathcal K}_{\beta}(x,y) f(y) dy  \nonumber\\
&=&\text{p.v.} \frac{1}{\pi^{d/2}\Gamma(|\beta|/2)}\int_{{\mathbb{R}}^d}
 \int_0^1 \Big(\frac{- \log r}{1-r^2}\Big)^{\frac{|\beta| - 2}{2}}
r^{|\beta|} {\vec H}_{\beta}\Big(\frac{y-rx}{\sqrt{1-r^2}}\Big)
 \frac {e^{- \frac{|y-rx|^2}{1-r^2}}}{(1-r^2)^{d/2+1}}\, \frac{dr}{r} f(y) dy.
\end{eqnarray*}

The {\em general Gaussian singular integrals}, are  generalizations of the Gaussian higher order Riesz  transform. The first formulation of general Gaussian singular integrals was given initially by W. Urbina in \cite{ur1}. Later, S. P\'erez \cite{pe} extend it.

\begin{defi} Given a $C^1$-function $F,$ satisfying the orthogonality condition
 \begin{equation} \label{Fort1}\index{general Gaussian singular integrals}
 \int_{\mathbb{R}^d} F(x) \gamma_d(dx) = 0,
\end{equation}
and such that for every $\varepsilon >0,$ there exist constants,  $C_{\varepsilon}$ and $C_{\varepsilon}'$
such that
\begin{equation} \label{condF2}
|F(x)| \leq C_{\epsilon}e^{\epsilon|x|^2} \quad \mbox{and} \quad |\nabla F(x)| \leq C_{\epsilon}'
e^{\epsilon|x|^2}.
\end{equation}
Then, for each $m \in {\mathbb N}$ the generalized Gaussian singular integral  is defined as
\begin{equation}
T_{F,m} f(x) = \int_{{\mathbb{R}}^d} \int_0^1 \left(\frac{- \log r}{1-r^2}\right)^{\frac{m - 2}{2}}
r^{m} F\Big(\frac{y-rx}{\sqrt{1-r^2}}\Big) \frac{e^{- \frac{|y-rx|^2}{1-r^2}}}{(1-r^2)^{d/2+1}} \frac{dr}{r} f(y) dy.
\end{equation}
$T_{F,m}$ can be written as
$$ T_{F,m} f(x) = \int_{{\mathbb{R}}^d}  \mathcal{K}_{F,m} (x,y)  f(y) dy,$$
denoting,
\begin{eqnarray}
\nonumber \mathcal{K}_{F,m}  (x,y) &=&  \int_0^1 \left(\frac{- \log r}{1-r^2}\right)^{\frac{m - 2}{2}} r^{m-1} F\Big(\frac{y-rx}{\sqrt{1-r^2}}\Big)
 \frac {e^{- \frac{|y-rx|^2}{1-r^2}}}{(1-r^2)^{d/2+1}} \,dr \\
 &=& \int_0^1 \varphi_m(r) F\Big(\frac{y-rx}{\sqrt{1-r^2}}\Big)  \frac{e^{- \frac{|y-rx|^2}{1-r^2}}}{(1-r^2)^{d/2+1}} \,dr \\
\nonumber& =&\frac{1}{2} \int_0^1 \psi_m(t)
F\Big(\frac{y-\sqrt{1-t}\, x}{\sqrt{t}}\Big)
\frac {e^{-u(t)}}{t^{d/2+1}} \,dt,\\
\nonumber
\end{eqnarray}
with $\varphi_m(r)= \left(\frac{- \log r}{1-r^2}\right)^{\frac{m - 2}{2}} r^{m-1};$ and taking the change of variables $t =1-r^2,$  with $\psi_m(t) = \varphi_m(\sqrt{1-t})/\sqrt{1-t},$  and $u(t) = \frac{|\sqrt{1-t} x -y|^2}{t}.$
\end{defi}

In \cite{pe} S. P\'erez  proved that the operator
$T_{F,m}$ is a bounded operator in $L^p(\gamma_d)$, $ 1 < p < \infty$
\begin{teo}\label{strongGenGaussSingInt}
The operators $T_{F,m} $ are $L^p(\gamma_d)$ bounded for $1< p < \infty,$ that is to say there exists $C>0,$ depending only on $p$ and dimension such that
\begin{equation}
\|T_{F,m} f\|_{p,\gamma} \leq C \| f\|_{p,\gamma},
\end{equation}
for any $f \in L^p(\gamma_d).$ \\
 \end{teo}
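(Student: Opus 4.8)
The plan is to use the local/global decomposition that is by now standard for Gaussian singular integrals (cf. \cite{pe}, \cite{urbina2019}). Fix $\rho>0$ and let
$$N=\Big\{(x,y)\in\mathbb{R}^d\times\mathbb{R}^d:\ |x-y|\le \rho\min\big(1,\tfrac{1}{|x|}\big)\Big\}$$
be the ``local region''. Split $T_{F,m}=T_{F,m}^{0}+T_{F,m}^{\infty}$, where $T_{F,m}^{0}$ has kernel $\mathcal{K}_{F,m}(x,y)\chi_{N}(x,y)$ and $T_{F,m}^{\infty}$ has kernel $\mathcal{K}_{F,m}(x,y)\chi_{N^{c}}(x,y)$. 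It suffices to show that each of the two operators is bounded on $L^{p}(\gamma_{d})$, $1<p<\infty$.

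For the \emph{global} part the idea is a direct size estimate. On $N^{c}$ the growth hypothesis \eqref{condF2}, used with $\epsilon>0$ small, lets one absorb $\big|F\big(\tfrac{y-rx}{\sqrt{1-r^{2}}}\big)\big|$ into a fraction of the Gaussian exponent, so that
$$|\mathcal{K}_{F,m}(x,y)|\le C\,\mathcal{K}^{*}(x,y),\qquad \mathcal{K}^{*}(x,y):=\int_{0}^{1}\varphi_{m}(r)\,\frac{e^{-a\,|y-rx|^{2}/(1-r^{2})}}{(1-r^{2})^{d/2+1}}\,dr,$$
for some $a\in(0,1)$. Estimating $\mathcal{K}^{*}$ on $N^{c}$ by the usual device of first minimizing $|y-rx|^{2}/(1-r^{2})$ in $r\in(0,1)$ and then splitting the $r$--integral at the minimizer, one obtains the Schur--type bounds
$$\int_{\mathbb{R}^{d}}\mathcal{K}^{*}(x,y)\chi_{N^{c}}(x,y)\,dy\le C,\qquad \int_{\mathbb{R}^{d}}\mathcal{K}^{*}(x,y)\chi_{N^{c}}(x,y)\,e^{-|x|^{2}}\,dx\le C\,e^{-|y|^{2}},$$
which, by Schur's lemma, give the boundedness of $T_{F,m}^{\infty}$ on $L^{p}(\gamma_{d})$ for every $1\le p\le\infty$, in particular on $L^{2}(\gamma_{d})$.

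For the \emph{local} part the plan is to show that $\mathcal{K}_{F,m}(x,y)\chi_{N}(x,y)$ is the restriction to $N$ of a classical Calder\'on--Zygmund kernel. Splitting the $r$--integral into the range near $r=1$ and the range away from it, using on $N$ that $|y-rx|\approx|x-y|$ once $1-r\le c|x-y|^{2}$, the exponential decay $e^{-|y-rx|^{2}/(1-r^{2})}$, the change of variable $t=1-r^{2}$, and \eqref{condF2} for $F$ and $\nabla F$, one gets
$$|\mathcal{K}_{F,m}(x,y)|\le \frac{C}{|x-y|^{d}},\qquad |\nabla_{y}\mathcal{K}_{F,m}(x,y)|\le \frac{C}{|x-y|^{d+1}}\qquad\text{on }N.$$
Since on $N$ the density $e^{-|x|^{2}}$ is comparable to $e^{-|y|^{2}}$, the $L^{p}(\gamma_{d})$--boundedness of $T_{F,m}^{0}$ is equivalent to the $L^{p}(dx)$--boundedness of a truncated Calder\'on--Zygmund operator, which follows from the classical theory once an $L^{2}$ bound is available. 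That bound we get from the $L^{2}(\gamma_{d})$--boundedness of $T_{F,m}$ itself: writing $\mathcal{K}_{F,m}$ in terms of the Mehler (Ornstein--Uhlenbeck) kernel and invoking the cancellation condition \eqref{Fort1}, which removes the otherwise non--integrable contribution of the endpoint $r\to1$, one bounds $T_{F,m}$ on $L^{2}(\gamma_{d})$; subtracting the already controlled $T_{F,m}^{\infty}$ gives the $L^{2}(\gamma_{d})$--bound, hence the $L^{2}(dx)$--bound, for $T_{F,m}^{0}$.

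Combining the two parts yields $\|T_{F,m}f\|_{p,\gamma}\le C\|f\|_{p,\gamma}$ for $1<p<\infty$. The main obstacle is the local estimate: one must verify the Calder\'on--Zygmund size and gradient bounds for $\mathcal{K}_{F,m}$ \emph{uniformly} on $N$, which requires carefully controlling, as $r\to1$, the competition between the factor $\varphi_{m}(r)$, the Gaussian factor $e^{-|y-rx|^{2}/(1-r^{2})}$, and the exponential growth allowed for $F$ and $\nabla F$ in \eqref{condF2}; the same endpoint $r\to1$ is precisely where the cancellation \eqref{Fort1} must be used in the $L^{2}$ step.
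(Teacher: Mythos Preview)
Note first that the paper does not itself prove this theorem: it is quoted from P\'erez~\cite{pe}, and the paper's own contribution is the analogous Theorem~\ref{strongGenGaussSingInt3} for $\overline{T}_{F,m}$, whose proof ``follows the same scheme'' as P\'erez's. The comparison below is therefore with that scheme, as carried out in the paper for $\overline{T}_{F,m}$.

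Your local/global split and the plan for the global part (absorb $|F|$ into part of the Gaussian via (\ref{condF2}) and then bound the remaining positive kernel) match the spirit of P\'erez's proof. The substantive divergence is in the \emph{local} part. P\'erez does not attempt to show that $\mathcal{K}_{F,m}$ itself is a Calder\'on--Zygmund kernel and then feed in an $L^{2}$ bound. Instead he introduces the explicit \emph{convolution} kernel
\[
\mathcal{K}(x)=\int_{0}^{\infty}F\Big(-\frac{x}{\sqrt t}\Big)\,e^{-|x|^{2}/t}\,\frac{dt}{t^{d/2+1}}=\frac{\Omega(x)}{|x|^{d}},
\]
uses the orthogonality condition~(\ref{Fort1}) solely to verify $\int_{S^{d-1}}\Omega\,d\sigma=0$ (so that convolution with $\mathcal{K}$ is a classical Calder\'on--Zygmund operator, bounded on $L^{p}(\mathbb{R}^{d})$ for $1<p<\infty$ by the standard theory), and then shows that on the local region the difference $\mathcal{K}_{F,m}(x,y)-\psi_{m}(0)\,\mathcal{K}(x-y)$ is pointwise controlled by error kernels such as $(1+|x|^{1/2})/|x-y|^{d-1/2}$ and $\int_{0}^{1}t^{-d/2}e^{-\delta|x-y|^{2}/t}(1-t)^{-1/2}\,dt$, both of which are dominated by the Hardy--Littlewood maximal function. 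No independent $L^{2}$ bound for $T_{F,m}$ is ever required.

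Your route, by contrast, hinges on first obtaining $L^{2}(\gamma_{d})$-boundedness of the full $T_{F,m}$ so that the Calder\'on--Zygmund machinery can be applied to $T_{F,m}^{0}$, and this is where there is a gap. You write that ``writing $\mathcal{K}_{F,m}$ in terms of the Mehler kernel and invoking the cancellation condition~(\ref{Fort1}) \dots\ one bounds $T_{F,m}$ on $L^{2}(\gamma_{d})$'', but for a general $F$ obeying only (\ref{condF2}) there is no spectral definition of $T_{F,m}$, and the single moment condition $\int F\,d\gamma_{d}=0$ does not by itself yield an $L^{2}(\gamma_{d})$ bound in any direct way. In P\'erez's argument the cancellation is used for a far more modest purpose---to make $\Omega$ mean-zero on the sphere---and all the $L^{2}$ work is offloaded to the classical convolution operator. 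Without a concrete replacement for that step, your local argument does not close.
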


 Now, reversing the order in (\ref{OUdecomp}), one gets another second order differential operator, that will be denoted as $\overline{L},$
\begin{equation}\label{AltOU}
(-\overline{L})= \sum_{i=1}^d\partial_\gamma^i (\partial_\gamma^i)^{*} =(-L) + d\,I= - \frac{1}{2}\Delta_x + \langle x, \nabla_x \rangle + d\,I,
\end{equation}
and therefore,
\begin{equation}\label{AltOU2}
\overline{L} = L -d\,I= \frac{1}{2}\Delta_x - \langle x, \nabla_x \rangle - d\,I.
\end{equation}
We will call $\overline{L} $ the {\em alternative
Ornstein-Uhlenbeck} operator\index{Ornstein-Uhlenbeck alternative
operator}. The Hermite polynomials $\{{\vec H}_{\nu}\}_\nu$ are
also eigenfunctions of $\overline{L},$ with eigenvalues
$\overline{\lambda}_\nu = - (|\nu|+d),$ i. e.
 \begin{equation}
\overline{L}  {\vec H}_{\nu} = (\lambda_{\nu} -d){\vec H}_{\nu} = - (|\nu|+d)  {\vec H}_{\nu}.
\end{equation}

In \cite{aimarforzaniscot}, H. Aimar, L. Forzani and R. Scotto  considered the following {\em alternative  Riesz transforms}, by taking the derivatives $ (\partial_\gamma^i)^{*} $ and Riesz potentials of the operator $(-\overline{L}),$
\begin{equation}\label{GaussRieszAltdef}
\overline{\mathcal{R}}_i= ( \partial^{i}_\gamma)^* (-\overline{L})^{-1/2}.
\end{equation}

They also considered alternative higher order Gaussian Riesz transforms, that is, for a multi-index $\beta,\; |\beta|\geq 1$  taking the representation of the gradient (\ref{altgrad}),
$$(\partial_\gamma^\beta)^*= \frac{(-1)^{|\beta|}}{2^{|\beta|/2}}e^{|x|^2}(\partial^\beta e^{-|x|^2}\,I )$$ and the Riesz potentials associated with
$\overline{L},$ these new singular integral operators are defined as follows:

\begin{defi} The alternative Gaussian Riesz transform $\overline{\mathcal R}_\beta$ for $|\beta|\geq 1$ is defined spectrally as
$$\overline{\mathcal R}_\beta f (x)= (\partial_\gamma^\beta)^* (-\overline{L})^{-|\beta|/2} f(x).$$
Thus, the  action of $\overline{\mathcal R}_\beta$ over the Hermite polynomial $ \vec{H}_\nu$ is given by
\begin{equation}\label{AltReiszAccion}
\overline{\mathcal{R}}_\beta \vec{H}_\nu = \frac{1}{2^{|\beta|/2}(|\nu|+d)^{|\beta|/2}}\vec{H}_{\nu+\beta},
\end{equation}
using  the fact that the Hermite polynomials  $\{\vec{H}_\nu\}$  are eigenfunctions of $\overline{L},$
$$(-\overline{L})^{-|\beta|/2}H_\nu= \displaystyle
\frac{1}{(|\nu|+d)^{|\beta|/2}}\vec{H}_\nu,$$
and Rodrigues' formula for the Hermite polynomials,
and therefore,
\begin{equation}\label{AltReiszAccion2}
\overline{\mathcal{R}}_\beta \vec{h}_\nu(x) = \frac{1}{(|\nu|+d)^{|\beta|/2}}\Big[\prod_{i=1}^d (\nu_i +\beta_i)(\nu_i +\beta_i-1)\cdots (\nu_i+d) \Big]^{1/2} \vec{h}_{\nu+\beta}(x).
\end{equation}
\end{defi}
It can be proved that the alternative higher order Gaussian Riesz transforms have then the following integral representation,

$$ \overline{\mathcal{R}}_{\beta}f(x)=\text{p.v.\
}e^{|x|^2}\int_{\mathbb{R}^d}\overline{\mathcal{K}}_{\beta}(x,y)f(y)\,
\gamma_d(dy) $$
where

$$ \overline{\mathcal{K}}_{\beta}(x,y)= C_{\beta}
\int_0^1\left(\frac{-\log r }{1-r^2}\right)^{\frac{|\beta|-2}{2}} r^{ d-1}
\vec{H}_{\beta}\left(\frac{x-ry}{\sqrt{1-r^2}}\right)
\frac{e^{-\frac{|x-ry|^2}{1-r^2}}}{(1-r^2)^{\frac{d}{2}+1}}\,dr,$$
for details see \cite[Chapter 9]{urbina2019}.

Now, if
\begin{eqnarray*}
K(x,y)&=&e^{|x|^{2}}\overline{\mathcal{K}}_{\beta}(x,y)e^{-|y|^{2}}\\
 &=&C_{\beta}\int_{0}^{1}\left(\frac{-\log r}{1-r^{2}}\right)^{\frac{|\beta|-2}{2}} r^{ d-1}
H_{\beta}\left(\frac{x-ry}{\sqrt{1-r^{2}}}\right)\frac{e^{\frac{-|y-rx|^{2}}{1-r^{2}}}}{(1-r^{2})^{\frac{d}{2}+1}}dr,
\end{eqnarray*}
then, $\overline{\mathcal{R}}_{\beta}$  can be written as
\begin{eqnarray}
&&\overline{\mathcal{R}}_{\beta}f(x)\\
\nonumber&& \hspace{0.5cm} = C_{\beta} \,\text{p.v.\
}\int_{\mathbb{R}^d} \int_{0}^{1}\left(\frac{-\log r}{1-r^{2}}\right)^{\frac{|\beta|-2}{2}} r^{ d-1}
H_{\beta}\left(\frac{x-ry}{\sqrt{1-r^{2}}}\right)\frac{e^{\frac{-|y-rx|^{2}}{1-r^{2}}}}{(1-r^{2})^{\frac{n}{2}+1}}dr f(y)\,dy.
\end{eqnarray}

Following the same idea to define general Gaussian singular integrals we now introduce a new class of Gaussian singular integrals, the {\em general alternative Gaussian singular integrals} as,
\begin{defi}\label{AlterGaussSingInt} Given a $C^1$-function $F,$ satisfying the orthogonality condition
 \begin{equation} \label{Fort2}
  \int_{\mathbb{R}^d} F(x) \gamma_d(dx) = 0,
\end{equation}
and such that for every $\varepsilon >0,$ there exist constants,  $C_{\varepsilon}$ and $C_{\varepsilon}'$
such that
\begin{equation} \label{condF3}
|F(x)| \leq C_{\epsilon}e^{\epsilon|x|^2} \quad \mbox{and} \quad |\nabla F(x)| \leq C_{\epsilon}'
e^{\epsilon|x|^2}.
\end{equation}
Then, for each $m \in {\mathbb N}$ the generalized alternative Gaussian singular integral  is defined as
\begin{equation}
\overline{T}_{F,m} f(x) = \int_{{\mathbb{R}}^d} \int_0^1 \left(\frac{- \log r}{1-r^2}\right)^{\frac{m - 2}{2}} r^{ d-1}
 F\Big(\frac{x-ry}{\sqrt{1-r^2}}\Big) \frac{e^{- \frac{|y-rx|^2}{1-r^2}}}{(1-r^2)^{d/2+1}} dr f(y) dy.
\end{equation}
Thus, $\overline{T}_{F,m}$ can be written as
$$\overline{T}_{F,m} f(x) = \int_{{\mathbb{R}}^d}  \overline{\mathcal{K}}_{F,m} (x,y)  f(y) dy,$$
where,
\begin{eqnarray}\label{kernelF}
\nonumber  \overline{\mathcal{K}}_{F,m} (x,y) &=&  \int_0^1 \left(\frac{- \log r}{1-r^2}\right)^{\frac{m - 2}{2}} r^{ d-1}F\Big(\frac{x-ry}{\sqrt{1-r^2}}\Big)
 \frac {e^{- \frac{|y-rx|^2}{1-r^2}}}{(1-r^2)^{d/2+1}} dr \\
 &=& \int_0^1 \varphi_m(r) F\Big(\frac{x-ry}{\sqrt{1-r^2}}\Big)  \frac{e^{- \frac{|y-rx|^2}{1-r^2}}}{(1-r^2)^{d/2+1}} dr \\
\nonumber& =& \frac{1}{2}\int_0^1 \psi_m(t)
F\Big(\frac{x-\sqrt{1-t}\, y}{\sqrt{t}}\Big)
\frac {e^{-u(t)}}{t^{d/2+1}} dt,\\
\nonumber
\end{eqnarray}
with $\varphi_m(r)= \left(\frac{- \log r}{1-r^2}\right)^{\frac{m - 2}{2}} r^{d-1};$ and after making the change of variables $t =1-r^2,$   $\psi_m(t) = \varphi_m(\sqrt{1-t})/\sqrt{1-t}= \left(\frac{- \log \sqrt{1-t}}{t}\right)^{\frac{m - 2}{2}} (\sqrt{1-t})^{d-2},$  and $u(t) = \frac{|y- \sqrt{1-t} x|^2}{t}.$
\end{defi}
Observe that the hypothesis on $F$ for the general Gaussian singular integrals, (\ref{condF2}) and the conditions on $F$ for the general alternative Gaussian singular integrals, (\ref{condF3}) are the same. We will prove the boundedness of $\overline{T}_{F,m} $ on  $L^p(\gamma_d)$, $ 1 < p < \infty$  following  \cite{pe}, for $d >1$.
\begin{teo}\label{strongGenGaussSingInt3}
The operators $ \overline{T}_{F,m} $ are $L^p(\gamma_d)$ bounded for $1< p < \infty,$  for $d>1;$ that is to say there exists $C>0,$ depending only on $p$ and dimension such that
\begin{equation}
\| \overline{T}_{F,m} f\|_{p,\gamma} \leq C \| f\|_{p,\gamma},
\end{equation}
for any $f \in L^p(\gamma_d).$ \\
 \end{teo}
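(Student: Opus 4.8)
The plan is to adapt to the alternative kernel \eqref{kernelF} the proof of Theorem~\ref{strongGenGaussSingInt} due to P\'erez \cite{pe}. The algebraic fact on which everything rests is the identity
\[
\frac{|y-rx|^{2}}{1-r^{2}}=\frac{|x-ry|^{2}}{1-r^{2}}+|y|^{2}-|x|^{2},
\]
which shows that the reduced Gaussian kernel $\overline{K}_{F,m}(x,y):=\pi^{d/2}e^{|y|^{2}}\overline{\mathcal{K}}_{F,m}(x,y)$ equals $\pi^{d/2}e^{|x|^{2}}\int_{0}^{1}\varphi_{m}(r)\,F\big(\tfrac{x-ry}{\sqrt{1-r^{2}}}\big)\dfrac{e^{-|x-ry|^{2}/(1-r^{2})}}{(1-r^{2})^{d/2+1}}\,dr$, so that, apart from the growth factor $e^{|x|^{2}}$ and the fact that $F$ is evaluated at $\tfrac{x-ry}{\sqrt{1-r^{2}}}$ rather than at $\tfrac{y-rx}{\sqrt{1-r^{2}}}$, the alternative operator has the same analytic shape as the one treated in \cite{pe}. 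I would then fix an admissible region $N=\{(x,y)\in\mathbb{R}^{d}\times\mathbb{R}^{d}:|x-y|\le C_{d}/(1+|x|)\}$, on which $1+|x|$ and $1+|y|$ are comparable and $\big||x|^{2}-|y|^{2}\big|$ is bounded, and split $\overline{T}_{F,m}=\overline{T}^{\,\mathrm{loc}}_{F,m}+\overline{T}^{\,\mathrm{glob}}_{F,m}$ according to whether $(x,y)\in N$ or $(x,y)\notin N$.

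For the local part I would use \eqref{condF3}: on $N$ the factor $e^{\varepsilon|x-ry|^{2}/(1-r^{2})}$ arising from $|F|\le C_{\varepsilon}e^{\varepsilon|\cdot|^{2}}$ is absorbed by the Gaussian at the price of the bounded quantity $e^{\varepsilon(|x|^{2}-|y|^{2})}$, and $|\nabla F|\le C'_{\varepsilon}e^{\varepsilon|\cdot|^{2}}$ controls the derivatives in the same way. Performing the changes of variables $t=1-r^{2}$ and then $s=|x-y|^{2}/t$ in the resulting $t$-integral, one checks that $\overline{\mathcal{K}}_{F,m}\chi_{N}$ satisfies the Calder\'on-Zygmund size and regularity estimates $|\overline{\mathcal{K}}_{F,m}(x,y)|\le C|x-y|^{-d}$ and $|\nabla_{x,y}\overline{\mathcal{K}}_{F,m}(x,y)|\le C|x-y|^{-d-1}$ on $N$, up to a remainder with bounded kernel. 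Since $\gamma_{d}$ is comparable to Lebesgue measure on the balls that define $N$, and the orthogonality \eqref{Fort2} provides the cancellation needed to run the argument near $r=1$, exactly as in \cite{pe}, the classical Calder\'on-Zygmund theory gives that $\overline{T}^{\,\mathrm{loc}}_{F,m}$ is bounded on $L^{p}(\gamma_{d})$ for $1<p<\infty$.

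For the global part, on $N^{c}$ the identity yields the pointwise bound
\[
\big|\overline{\mathcal{K}}_{F,m}(x,y)\big|\le C_{\varepsilon}\,e^{\varepsilon(|x|^{2}-|y|^{2})}\int_{0}^{1}\Big(\frac{-\log r}{1-r^{2}}\Big)^{\frac{m-2}{2}}r^{d-1}\frac{e^{-(1-\varepsilon)|y-rx|^{2}/(1-r^{2})}}{(1-r^{2})^{d/2+1}}\,dr,
\]
and I would apply the forward and backward estimates on the resulting dominating positive kernel that are standard for Gaussian singular integrals (cf.\ \cite{aimarforzaniscot}): it suffices to prove the backward estimate $e^{|y|^{2}}\int_{N^{c}}|\overline{\mathcal{K}}_{F,m}(x,y)|\,e^{-|x|^{2}}\,dx\le C$ uniformly in $y$, which gives $\overline{T}^{\,\mathrm{glob}}_{F,m}\colon L^{1}(\gamma_{d})\to L^{1}(\gamma_{d})$, together with the complementary forward ($L^{\infty}$-type) estimate, and then to interpolate. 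Completing the square in the Gaussian integration in $x$ (resp.\ $y$) reduces the backward (resp.\ forward) estimate to a one-variable integral of the type $\int_{0}^{1}\big(\tfrac{-\log r}{1-r^{2}}\big)^{(m-2)/2}\tfrac{r^{d-1}}{1-r^{2}}\,dr$ cut down by the constraint $(x,y)\notin N$; this constraint forces $|y-rx|^{2}/(1-r^{2})$ (resp.\ $|x-ry|^{2}/(1-r^{2})$) to be large for $r$ near $1$, which produces the decay that makes the $r$-integral converge at the endpoint $r=1$, and it is in the ensuing splitting according to the size of $|x|$ (resp.\ $|y|$) that the restriction $d>1$ comes into play. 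The contribution of the range of $r$ bounded away from $1$ (equivalently $t$ away from $0$) has no singularity along the diagonal, but its kernel fails to be integrable in $y$ uniformly in $x$ because $|F|$ is only controlled by $e^{\varepsilon|\cdot|^{2}}$; after the substitution $y=rx+\sqrt{1-r^{2}}z$ the inner operator becomes a twisted Mehler-type average whose $L^{p}(\gamma_{d})$ behaviour is governed by the Ornstein-Uhlenbeck semigroup $T_{\sqrt{1-r^{2}}}$ and by \eqref{Fort2} (so that $T_{0}F=0$), with a norm integrable against $\varphi_{m}(r)(1-r^{2})^{-1}\,dr$ on $(0,1/2)$.

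The step I expect to be the main obstacle is this global estimate. In the non-alternative situation of \cite{pe} the argument $\tfrac{y-rx}{\sqrt{1-r^{2}}}$ of $F$ coincides with the one inside the Gaussian exponential, so $|F|\le C_{\varepsilon}e^{\varepsilon|\cdot|^{2}}$ is absorbed for free; for $\overline{T}_{F,m}$ the mismatch between $\tfrac{x-ry}{\sqrt{1-r^{2}}}$ and $\tfrac{y-rx}{\sqrt{1-r^{2}}}$ produces the factor $e^{\varepsilon(|x|^{2}-|y|^{2})}$, which is harmless on $N$ but on $N^{c}$ must be balanced very carefully against the weight $e^{|y|^{2}}$ of the reduced kernel and against $\gamma_{d}$ itself, in both the forward and the backward direction; this balancing, with the alternative power $r^{d-1}$ in place of $r^{m-1}$, is what forces the hypothesis $d>1$ and constitutes the technical heart of the argument. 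Once it is carried out, adding the local and the global contributions yields $\|\overline{T}_{F,m}f\|_{p,\gamma}\le C\|f\|_{p,\gamma}$ for all $f\in L^{p}(\gamma_{d})$, $1<p<\infty$, when $d>1$.
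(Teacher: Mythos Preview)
Your local part is essentially the paper's approach: both reduce $\overline{T}_{F,m,L}$ to a genuine Calder\'on--Zygmund convolution operator plus controllable remainder terms, using that on the admissible region $|x|^{2}-|y|^{2}$ is bounded so the mismatch factor $e^{\varepsilon(|x|^{2}-|y|^{2})}$ is harmless. The paper carries this out in more detail via Lemma~\ref{bound1}, but the strategy is the same.

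For the global part your route diverges from the paper's, and there is a real gap. The paper does \emph{not} prove endpoint $L^{1}$ and $L^{\infty}$ bounds and interpolate; it proves $L^{p}$ boundedness directly, for each fixed $p$, by the saddle--point machinery of Lemma~\ref{CZCruCEst2}: one evaluates the $t$--integral at the critical point $t_{0}$ (the minimiser of $u(t)=|y-\sqrt{1-t}\,x|^{2}/t$), splits according to the sign of $b=2\langle x,y\rangle$, and shows that the reduced kernel $e^{(\varepsilon-1/p)(|x|^{2}-|y|^{2})}e^{-(1-\varepsilon)u(t_{0})}t_{0}^{-d/2}$ is bounded by the symmetric quantity $|x+y|^{d}e^{-\alpha_{p}|x+y||x-y|}$, which is integrable in either variable. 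Your forward ($L^{\infty}$) estimate, by contrast, is genuinely problematic: the growth $e^{\varepsilon|x|^{2}}$ is not absorbed uniformly in $x$ by the global constraint $(x,y)\notin N$ alone, and your proposed remedy---treating the range $r$ bounded away from $1$ as a ``twisted Mehler average'' controlled by the Ornstein--Uhlenbeck semigroup and the orthogonality~\eqref{Fort2}---is too vague to close the argument (in particular, \eqref{Fort2} plays its role in the \emph{local} part, as the mean--zero condition on the sphere that makes the limiting kernel $\mathcal{K}$ a Calder\'on--Zygmund kernel, not as cancellation in an $L^{p}$ semigroup bound). The endpoint scheme you sketch is therefore not a substitute for Lemma~\ref{CZCruCEst2}.

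One further misattribution: the hypothesis $d>1$ does not enter through any splitting of the global region by the size of $|x|$ or $|y|$. It enters only through Lemma~\ref{bound1}, which needs $d>1$ so that $\varphi_{m}(r)=\big(\tfrac{-\log r}{1-r^{2}}\big)^{(m-2)/2}r^{d-1}$ stays bounded as $r\to 0^{+}$; this bound on $\psi_{m}$ is used in both the local and the global estimates.
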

 In \cite{aimarforzaniscot}, H. Aimar, L. Forzani and R. Scotto  obtained a surprising result: the alternative Riesz transforms  $\overline{\mathcal{R}}_\beta$  are weak type $(1,1)$  for all multi-index $\beta$, i. e.  independently of their orders which is a contrasting fact with respect to the anomalous behavior of the higher order Riesz transforms $\mathcal{R}_\beta$. We prove that the general alternative Gaussian singular integrals $ \overline{T}_{F,m} $ are also weak $(1,1)$ with respect to the Gaussian measure.

\begin{teo} \label{altrieszteo} For $d > 1,$ there exists a constant $ C
$ depending only on $d$ and $m$ such that for all $
\lambda >0 $ and $ f \in L^1(\gamma_d) $, we have
$$\gamma_d \Big(\Big\{ x\in \mathbb{R}^d:\; \overline{T}_{F,m}(x) > \lambda \Big\}\Big) \le \frac{C}{\lambda}
 \int_{\mathbb{R}^d} |f(y)|\gamma_d(dy).$$
\end{teo}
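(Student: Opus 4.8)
The plan is to adapt the approach of Aimar, Forzani and Scotto \cite{aimarforzaniscot} (and, for the local analysis, that of P\'erez \cite{pe}): split $\overline{T}_{F,m}$ into a local and a global part relative to the admissible region $N_d=\{(x,y):\ |x-y|\le C_d/(1+|x|)\}$, writing $\overline{T}_{F,m}=\overline{T}_{F,m}^{\mathrm{loc}}+\overline{T}_{F,m}^{\mathrm{glob}}$ with kernels $\overline{\mathcal{K}}_{F,m}\chi_{N_d}$ and $\overline{\mathcal{K}}_{F,m}\chi_{N_d^c}$ respectively. The common starting point is the elementary identity $|y-\sqrt{1-t}\,x|^2=|x-\sqrt{1-t}\,y|^2+t(|y|^2-|x|^2)$, which produces the factorization $\overline{\mathcal{K}}_{F,m}(x,y)=e^{|x|^2-|y|^2}\widetilde{\mathcal{K}}_{F,m}(x,y)$ with
$$
\widetilde{\mathcal{K}}_{F,m}(x,y)=\frac12\int_0^1\psi_m(t)\,F\Big(\tfrac{x-\sqrt{1-t}\,y}{\sqrt t}\Big)\,\frac{e^{-|x-\sqrt{1-t}\,y|^2/t}}{t^{d/2+1}}\,dt,
$$
so that $\overline{T}_{F,m}f(x)=\pi^{d/2}e^{|x|^2}\int_{\mathbb{R}^d}\widetilde{\mathcal{K}}_{F,m}(x,y)f(y)\,\gamma_d(dy)$, which is precisely the structure of the alternative Riesz transforms.

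For the global part, Fubini's theorem reduces the $L^1(\gamma_d)\to L^1(\gamma_d)$ boundedness (which implies weak type $(1,1)$) to the uniform estimate $\sup_{y}\int_{\mathbb{R}^d}|\widetilde{\mathcal{K}}_{F,m}(x,y)|\,\chi_{N_d^c}(x,y)\,dx<\infty$. Here I would use \eqref{condF3} in the form $|F(w)|\,e^{-|w|^2}\le C_\varepsilon e^{-(1-\varepsilon)|w|^2}$ with $\varepsilon<1$, which retains a Gaussian factor, so that
$$
|\widetilde{\mathcal{K}}_{F,m}(x,y)|\le\frac{C_\varepsilon}{2}\int_0^1\psi_m(t)\,\frac{e^{-(1-\varepsilon)|x-\sqrt{1-t}\,y|^2/t}}{t^{d/2+1}}\,dt;
$$
after the change of variables $t=1-r^2$ and completion of squares this $t$-integral is of modified Bessel type, giving the Calder\'on-Zygmund size $|x-y|^{-d}$ when $|x-y|(1+|y|)\le1$ but genuine decay of order $e^{-c|x-y|\,|y|}$ once $|x-y|(1+|y|)\ge1$. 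On $N_d^c$ one has $|x-y|(1+|x|)>C_d$, which (for $C_d$ large) places the dominant region in the decaying regime and also controls the behaviour as $t\to0$, while the behaviour as $t\to1$ is harmless because $\psi_m(t)\le C(\log\tfrac{1}{1-t})^{(m-2)/2}(1-t)^{(d-2)/2}$; splitting $N_d^c$ according to the relative sizes of $|x|$ and $|y|$ and carrying out these estimates yields the required bound.

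For the local part, on $N_d$ one has $||x|^2-|y|^2|=|x-y|\,|x+y|\le 2C_d$, so $e^{|x|^2-|y|^2}$ is bounded above and below; hence it suffices to verify that $\widetilde{\mathcal{K}}_{F,m}\chi_{N_d}$ is a classical Calder\'on-Zygmund kernel, namely $|\widetilde{\mathcal{K}}_{F,m}(x,y)|\le C|x-y|^{-d}$ and $|\nabla_{x,y}\widetilde{\mathcal{K}}_{F,m}(x,y)|\le C|x-y|^{-d-1}$ for $(x,y)\in N_d$. These follow from the $t$-representation by splitting the integral at $t\sim|x-y|^2$, using that on $N_d$ the point $(x-\sqrt{1-t}\,y)/\sqrt t$ is a bounded perturbation of $(x-y)/\sqrt t$, and using \eqref{condF3} for $F$ and $\nabla F$. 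Since $\overline{T}_{F,m}$ is bounded on $L^2(\gamma_d)$ by Theorem \ref{strongGenGaussSingInt3} and the global part is bounded on $L^2(\gamma_d)$ as well, $\overline{T}_{F,m}^{\mathrm{loc}}$ is bounded on $L^2(\gamma_d)$; combined with the kernel estimates above and the fact that $\gamma_d$, restricted to the family of admissible balls $B(x,C_d/(1+|x|))$, is doubling and comparable with Lebesgue measure, the standard theory of local Calder\'on-Zygmund operators with respect to the Gaussian measure (see \cite{pe,aimarforzaniscot}) yields that $\overline{T}_{F,m}^{\mathrm{loc}}$ is of weak type $(1,1)$ with respect to $\gamma_d$. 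Adding the two estimates completes the proof.

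The main obstacle is the global estimate: the kernel $\widetilde{\mathcal{K}}_{F,m}$ fails to be absolutely integrable off the diagonal in any naive way — a crude triangle inequality inside the $t$-integral leads to a bound diverging logarithmically in $|y|$ — so one must genuinely exploit the Gaussian factor $t^{-1}e^{-|x-\sqrt{1-t}\,y|^2/t}$, which upon change of variables and completion of squares becomes decay in the product $|x-y|\,|y|$, while at the same time allowing $F$ to grow like $e^{\varepsilon|x|^2}$; keeping every constant uniform in $y$, and controlling the endpoint regimes $t\to0$ (handled by the restriction to $N_d^c$) and $t\to1$ (where the hypothesis $d>1$ enters), is the delicate part.
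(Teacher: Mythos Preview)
Your local analysis is essentially the same as the paper's: rewrite the kernel as $\mathcal K(x,y)=e^{|x|^2}\overline{\mathcal K}_{F,m}(x,y)e^{-|y|^2}$ (your $\widetilde{\mathcal K}_{F,m}$), verify the Calder\'on--Zygmund size and gradient estimates on the admissible region, and combine this with the $L^p(\gamma_d)$ boundedness from Theorem~\ref{strongGenGaussSingInt3} and the local Calder\'on--Zygmund machinery for $\gamma_d$.

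The genuine gap is in the global part. You propose to prove \emph{strong} $L^1(\gamma_d)\to L^1(\gamma_d)$ boundedness by showing $\sup_y\int_{N_d^c}|\widetilde{\mathcal K}_{F,m}(x,y)|\,dx<\infty$, and you assert that the kernel decays like $e^{-c|x-y|\,|y|}$ when $|x-y|(1+|y|)\ge 1$. This decay is false. Take $x=\lambda y/|y|$ with $0<\lambda<|y|$; then at $t^{*}=1-\lambda^2/|y|^2\in(0,1)$ one has $x-\sqrt{1-t^{*}}\,y=0$, so the Gaussian factor in the integrand equals $1$ there and the $t$-integral is of order $|y|^{-1}(t^{*})^{-(d+1)/2}$, not exponentially small. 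Equivalently, the minimum over $t$ of $|x-\sqrt{1-t}\,y|^2/t$ equals $\tfrac12(|x|^2-|y|^2)+\tfrac12|x+y|\,|x-y|$, which vanishes on the whole segment from $0$ to $y$. Integrating the majorant $C_\varepsilon\int_0^1\psi_m(t)\,t^{-d/2-1}e^{-(1-\varepsilon)|x-\sqrt{1-t}\,y|^2/t}\,dt$ in $x$ over $N_d^c$ and then in $t$ gives exactly the $\log(1+|y|)$ you yourself flag, and this divergence is sharp for the majorant: the restriction to $N_d^c$ only kills the range $t\lesssim 1/|y|^2$, leaving $\int_{c/|y|^2}^{1}\psi_m(t)\,t^{-1}\,dt\sim\log|y|$. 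Since you are only invoking \eqref{condF3} (a size condition) and not the cancellation \eqref{Fort2}, there is no mechanism in your outline to recover from this.

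The paper does \emph{not} attempt strong $L^1$ for the global piece. Instead it follows Aimar--Forzani--Scotto and proves the pointwise domination
\[
|\overline{T}_{F,m,G}f(x)|\le C\,\mathcal M_\Phi f(x),\qquad \Phi(t)=e^{-ct^2},
\]
by Forzani's generalized Gaussian maximal function: after using \eqref{condF3} to absorb $F$ into the exponential, the $r$-integral is split into $[0,3/4]$, $[3/4,\,1-\zeta/|x|^2]$ and $[1-\zeta/|x|^2,\,1]$, and each piece is compared with a single value of the supremum defining $\mathcal M_\Phi$. The weak type $(1,1)$ of $\mathcal M_\Phi$ (Theorem~4.18 of \cite{urbina2019}, i.e.\ Theorem~1.1 of \cite{aimarforzaniscot}) then yields the weak type $(1,1)$ of $\overline{T}_{F,m,G}$. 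This gives only weak type, not strong, consistent with the failure of your $L^1$ kernel bound.
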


As usual in what follows $C$ represents a constant that is not necessarily the same in each occurrence.
\section{Proofs of the main results.}

In what follows we need the following technical results.
\begin{lemma}\label{bound1} For the function  $\psi_m(t) = \varphi_m(\sqrt{1-t})/\sqrt{1-t},$ considered  in the definition \ref{AlterGaussSingInt} we have
\begin{itemize}
\item [i)] There exists a constant $C > 0$ such that
\begin{equation}\label{ineq2}
\displaystyle|\psi_{m}(t)|\leq \frac{C}{\sqrt{1-t}},
\end{equation}
for $0\leq t<1$ and $d>1$ \item [ii)] There exists a constant $C >
0$ such that
\begin{equation}\label{ineq1}
|\psi_{m}(t)-\psi_{m}(0)|\leq \displaystyle C\frac{t}{\sqrt{1-t}},
\end{equation}
for $0\leq t<1,$ and $d>1$ where
$\psi_{m}(0)=\psi_{m}(0^{+})=2^{-(m-2)/2}$.
\end{itemize}
\end{lemma}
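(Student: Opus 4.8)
The plan is to reduce everything to the explicit formula for $\psi_m$ obtained after the change of variables $t=1-r^2$, namely
$$\psi_m(t)=\left(\frac{-\log\sqrt{1-t}}{t}\right)^{\frac{m-2}{2}}(1-t)^{\frac{d-2}{2}}=\left(\frac{-\log(1-t)}{2t}\right)^{\frac{m-2}{2}}(1-t)^{\frac{d-2}{2}},$$
and then to split $[0,1)$ into the compact piece $[0,1/2]$ and the piece $[1/2,1)$, treating each separately. On $[0,1/2]$ the function is smooth and harmless; essentially all the content sits near $t=1$, where for $m>2$ the factor $(-\log\sqrt{1-t})^{(m-2)/2}$ blows up and must be absorbed into $(1-t)^{-1/2}$.

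On $[0,1/2]$ I would argue by regularity. The function $h(t):=\frac{-\log(1-t)}{2t}$ extends across $t=0$ with $h(0)=\tfrac{1}{2}$ and is real-analytic, in particular $C^1$, and strictly positive on $[0,1/2]$; since moreover $1-t\ge\tfrac{1}{2}$ there, $(1-t)^{(d-2)/2}$ is $C^1$ on $[0,1/2]$ as well. Hence $\psi_m=h^{(m-2)/2}(1-t)^{(d-2)/2}$ is $C^1$ on $[0,1/2]$, with $\psi_m(0)=(1/2)^{(m-2)/2}=2^{-(m-2)/2}$, which identifies $\psi_m(0^+)$. Consequently $|\psi_m|$ and $|\psi_m'|$ are bounded on $[0,1/2]$ by a constant depending only on $m$ and $d$; since $\tfrac{1}{\sqrt{1-t}}\ge 1$ this yields (i) on $[0,1/2]$, and the mean value theorem gives $|\psi_m(t)-\psi_m(0)|\le \|\psi_m'\|_{\infty,[0,1/2]}\,t\le C\tfrac{t}{\sqrt{1-t}}$, which is (ii) on $[0,1/2]$.

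On $[1/2,1)$ I would estimate $\sqrt{1-t}\,|\psi_m(t)|$ directly. Setting $s=1-t\in(0,1/2]$,
$$\sqrt{1-t}\,|\psi_m(t)|=\left(\frac{-\log s}{2(1-s)}\right)^{\frac{m-2}{2}}s^{\frac{d-1}{2}},$$
and since $2(1-s)\in[1,2)$ and $-\log s\ge\log 2$, the first factor is $\le C\bigl(1+(-\log s)^{(m-2)/2}\bigr)$ (bounded when $m\le 2$, at most $(-\log s)^{(m-2)/2}$ when $m\ge 2$). Because $d>1$ forces $\tfrac{d-1}{2}>0$, the map $s\mapsto s^{(d-1)/2}(-\log s)^{\max\{(m-2)/2,0\}}$ is continuous on $(0,1/2]$ and tends to $0$ as $s\to 0^+$, hence is bounded by a constant depending only on $m$ and $d$; this gives $\sqrt{1-t}\,|\psi_m(t)|\le C$, i.e. (i) on $[1/2,1)$. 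For (ii) on this interval I would combine (i) with the triangle inequality: there $\tfrac{t}{\sqrt{1-t}}\ge\tfrac{1}{2\sqrt{1-t}}$ and $\tfrac{t}{\sqrt{1-t}}\ge\tfrac{1}{2}$, so
$$|\psi_m(t)-\psi_m(0)|\le|\psi_m(t)|+2^{-(m-2)/2}\le\frac{C}{\sqrt{1-t}}+2^{-(m-2)/2}\le C'\,\frac{t}{\sqrt{1-t}}.$$
Gluing the estimates on the two intervals proves both (i) and (ii).

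I expect the only real difficulty to be this last point: controlling $\psi_m$ near $t=1$ when $m>2$. The bound $|\psi_m(t)|\le C(1-t)^{-1/2}$ there is not a continuity- or Lipschitz-type estimate but the elementary fact that a power of a logarithm is dominated by any negative power of $1-t$, and it is precisely the extra decay $(1-t)^{(d-1)/2}$ — available only because $d>1$ — that makes the absorption work; for $d=1$ and $m>2$ the estimate fails. Everything else is routine.
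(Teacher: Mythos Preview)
Your proof is correct. Both approaches hinge on the same key fact---that for $d>1$ the power $(1-t)^{(d-1)/2}$ kills any logarithmic blow-up $(-\log(1-t))^{(m-2)/2}$ near $t=1$---but the organization differs. The paper proves (ii) first by writing $|\psi_m(t)-\psi_m(0)|=\dfrac{t}{\sqrt{1-t}}\,|B(t)|$ for an explicit quotient $B$ and then showing $B$ extends continuously to $[0,1]$, computing both endpoint limits via repeated applications of L'H\^opital; part (i) is then deduced by the same limit computations applied to $\varphi_m$. Your decomposition into $[0,1/2]$ and $[1/2,1)$ sidesteps these computations: near $t=0$ you invoke the analyticity of $t\mapsto -\log(1-t)/t$ and the mean value theorem, and near $t=1$ you get (ii) for free from (i) by the triangle inequality, since $t/\sqrt{1-t}$ is bounded below there. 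Your route is shorter and avoids any explicit limit evaluation, at the cost of not producing the exact values $\lim_{t\to 0^+}B(t)$ and $\lim_{t\to 1^-}B(t)$ that the paper records; since those values are never used downstream, nothing is lost.
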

\begin{proof}
\begin{enumerate}

\item [ii)] It is clear, by L'Hopital's rule that
 $$\displaystyle\lim_{t\rightarrow 0^{+}}\frac{-\log \sqrt{1-t}}{t}=\lim_{t\rightarrow 0^{+}}\frac{1}{2(1-t)}=1/2,$$
 and therefore
$$\psi_{m}(0^{+})=\displaystyle\lim_{t\rightarrow 0^{+}}\psi_{m}(t)=\displaystyle\lim_{t\rightarrow 0^{+}}=\frac{\left(\frac{-\log \sqrt{1-t}}{t}\right)^{\frac{m-2}{2}}}{\sqrt{1-t}}(\sqrt{1-t})^{d-1}=2^{-(m-2)/2}.$$
Now,
\begin{eqnarray*}
|\psi_{m}(t)-\psi_{m}(0)|&=&\left|\frac{\left(\frac{-\log \sqrt{1-t}}{t}\right)^{\frac{m-2}{2}}}{\sqrt{1-t}}(\sqrt{1-t})^{d-1}-(1/2)^{(m-2)/2}\right|\\
&=&\frac{t}{\sqrt{1-t}}\left|\frac{\left(\frac{-\log \sqrt{1-t}}{t}\right)^{\frac{m-2}{2}}(\sqrt{1-t})^{d-1}-(1/2)^{(m-2)/2}\sqrt{1-t}}{t}\right|\\
&=&\frac{t}{\sqrt{1-t}}\left|B(t)\right|,
\end{eqnarray*}
where
$$B(t) = \frac{\left(\frac{-\log
\sqrt{1-t}}{t}\right)^{\frac{m-2}{2}}(\sqrt{1-t})^{d-1}-(1/2)^{(m-2)/2}\sqrt{1-t}}{t}.$$
Clearly the function  $B$ is continuous on $(0,1).$ Thus it is
enough to prove that $\displaystyle\lim_{t\rightarrow 0^{+}}B(t)$
and $\displaystyle\lim_{t\rightarrow 1^{-}}B(t)$ exist, since then
$B$ is continuous on $[0,1]$ and therefore it is bounded there.

Let us consider fist  the limit $\displaystyle\lim_{t\rightarrow 1^{-}}B(t)$. Observe that using L'Hopital's rule, can be proved that
\begin{equation}\label{ineq3}
 \displaystyle\lim_{t\rightarrow 1^{-}}(-\log \sqrt{1-t})\sqrt{1-t}=0.
\end{equation}

If $m=1$ or $m=2$, $\displaystyle\lim_{t\rightarrow 1^{-}}(-\log \sqrt{1-t})^{(m-2)/2}(\sqrt{1-t})^{d-1}=0,$ since $d>1,$ and therefore
   $$\displaystyle\lim_{t\rightarrow 1^{-}}B(t)=0.$$
On the other hand, if  $m>2$, and $m/2\leq d$, then
\begin{eqnarray*}
&&\lim_{t\rightarrow 1^{-}}(-\log \sqrt{1-t})^{(m-2)/2}(\sqrt{1-t})^{d-1}  \\
&& \hspace{3cm} = \lim_{t\rightarrow 1^{-}}\left(-\log \left(\sqrt{1-t}\right)\sqrt{1-t}\right)^{(m/2-1)}(\sqrt{1-t})^{d-m/2}=0.
\end{eqnarray*}
 Now, if  $m>2$, and $m/2>d,$ taking $n$ such that  $n\in \mathbb{N}:n\leq m/2<n+1$ then, using L'Hopital's rule $n$ times
\begin{eqnarray*}
&& \lim_{t\rightarrow 1^{-}}(-\log \sqrt{1-t})^{(m-2)/2}(\sqrt{1-t})^{d-1}\\
&& \hspace{1.5cm} =\frac{(\frac{m}{2}-1)}{(d-1)}\lim_{t\rightarrow 1^{-}}\left(-\log\sqrt{1-t}\right)^{m/2-2}(\sqrt{1-t})^{d-1}\\
&& \hspace{1.5cm} =\frac{(\frac{m}{2}-1)(\frac{m}{2}-2)}{(d-1)^{2}}\lim_{t\rightarrow
1^{-}}\left(-\log\sqrt{1-t}\right)^{m/2-3}(\sqrt{1-t})^{d-1}\\
&&\hspace{1.5cm}\vdots \\
&&\hspace{1.5cm}\cdots \\
&&\hspace{1.5cm}\vdots \\
&&\hspace{1.5cm} =\frac{(\frac{m}{2}-1)(\frac{m}{2}-2)\cdots(\frac{m}{2}-n)}{(d-1)^{n}}\lim_{t\rightarrow
1^{-}}\left(-\log\sqrt{1-t}\right)^{m/2-(n+1)}(\sqrt{1-t})^{d-1}=0,
\end{eqnarray*}
as $m/2-(n+1) < 0.$ Hence,
 $$\displaystyle\lim_{t\rightarrow 1^{-}}B(t)=0.$$
Now, we consider the limit $\displaystyle\lim_{t\rightarrow 0^{+}}B(t)$. Using again L'Hospital's rule,
\begin{eqnarray*}
&&\displaystyle\lim_{t\rightarrow 0^{+}}B(t)=\displaystyle\lim_{t\rightarrow 0^{+}}\frac{\left(\frac{-\log \sqrt{1-t}}{t}\right)^{\frac{m-2}{2}}(\sqrt{1-t})^{d-1}-(1/2)^{(m-2)/2}\sqrt{1-t}}{t}\\
&=&\displaystyle\lim_{t\rightarrow 0^{+}}(\frac{m}{2}-1)\left(\frac{-\log \sqrt{1-t}}{t}\right)^{\frac{m}{2}-2}\left(\frac{\frac{t}{2(1-t)}+\frac{1}{2}\log(1-t)}{t^{2}}\right)(\sqrt{1-t})^{d-1}\\
&&-\left(\frac{-\log
\sqrt{1-t}}{t}\right)^{\frac{m}{2}-1}(d-1)(\sqrt{1-t})^{d-2}\frac{1}{2\sqrt{1-t}}+(1/2)^{m/2-1}\frac{1}{2\sqrt{1-t}}.
\end{eqnarray*}
Observe that using L'Hopital rule twice, we have that
\begin{eqnarray*}
\lim_{t\rightarrow 0^{+}}\left(\frac{\frac{t}{2(1-t)}+\frac{1}{2}\log(1-t)}{t^{2}}\right)&=&\lim_{t\rightarrow 0^{+}}\left(\frac{\frac{1}{2(1-t)^{2}}-\frac{1}{2(1-t)}}{2t}\right)\\
&=&\frac{1}{4}\lim_{t\rightarrow 0^{+}}\frac{1}{1-t}.\lim_{t\rightarrow 0^{+}}\frac{\frac{1}{1-t}-1}{t}\\
&=&\frac{1}{4}\lim_{t\rightarrow
0^{+}}\frac{1}{1-t}.\lim_{t\rightarrow
0^{+}}\frac{1}{(1-t)^{2}}=\frac{1}{4}.
\end{eqnarray*}
Therefore,
\begin{eqnarray*}
&&\lim_{t\rightarrow 0^{+}}B(t)= \displaystyle\lim_{t\rightarrow 0^{+}}(\frac{m}{2}-1)\left(\frac{-\log \sqrt{1-t}}{t}\right)^{\frac{m}{2}-2}\left(\frac{\frac{t}{2(1-t)}+\frac{1}{2}\log(1-t)}{t^{2}}\right)(\sqrt{1-t})^{d-1}\\
&& \hspace{2cm}-\left(\frac{-\log \sqrt{1-t}}{t}\right)^{m/2-1}(d-1)(\sqrt{1-t})^{d-2}\frac{1}{2\sqrt{1-t}}+(1/2)^{m/2-1}\frac{1}{2\sqrt{1-t}}\\
&& \hspace{1.5cm} =\left(\frac{m}{2}-1\right)(1/2)^{m/2}-(d-1)(1/2)^{m/2}+(1/2)^{m/2}= \left(\frac{m}{2}+1-d\right)(1/2)^{m/2}
\end{eqnarray*}

\item[i)] It is enough to prove that $\varphi_{m}$ is bounded, i.e. there exist a constant $C>0$ such that\\
$| \varphi_{m}(r) | \leq C,$ for all $r \in [0,1]$ and $ d>1$.\\
Since $\varphi_{m}(r)$ is continuous on $(0,1)$ it is enough to see
that\\

$\displaystyle\lim_{r\rightarrow 0^{+}}  \;\varphi_{m}(r)$
and $\displaystyle\lim_{r\rightarrow 1^{-}} \;\varphi_{m}(r)$
 exist and\\
therefore $ \varphi_{m}(r)$ is a continuous function on
$[0,1]$ and then bounded.\\
  Now, from computations done in ii), we have\\

 $\displaystyle \lim_{r\rightarrow 1^{-}}  \;\varphi_{m}(r)= \displaystyle \lim_{r\rightarrow 1^{-}} \left(\frac{-\log r}{1-r^2}\right)^{\frac{m-2}{2}} r^{d-1} =2^{-(m-2)/2},$
 and\\

$\displaystyle\lim_{r\rightarrow 0^{+}}\varphi_{m}(r) = \displaystyle\lim_{r\rightarrow 0^{+}}\left(\frac{-\log r}{1-r^2}\right)^{\frac{m-2}{2}}r^{d-1}  =0.$
 \end{enumerate}
\end{proof}
In what follows we use the same notation as Proposition 4.23 \cite{urbina2019}, see also \cite{TesSon},
$$ a= a(x,y)= |x|^2 + |y|^2,\; b= b(x,y) := 2\langle x, y\rangle,$$
$$u(t) = u(t; x,y) := \frac{|y- \sqrt{1-t}x|^2}{t}=
\frac{a}{t}- \frac{\sqrt{1-t}}{t} b - |x|^2,$$
$$ t_{0}:=\frac{2\sqrt{a^{2}-b^{2}}}{a+\sqrt{a^{2}-b^{2}}} \sim \frac{\sqrt{a^2 - b^2}}{a} \sim \frac{\sqrt{a-b}}{\sqrt{a+b}} = \frac{|x-y|}{|x+y|},$$
and
$$  u_0 :=
u(t_0) = \frac{\sqrt{a^2 - b^2}}{2} + \frac{a}{2} - |x|^2 =\frac{|y|^2-|x|^2}{2}+\frac{\sqrt{a^2-b^2}}{2}.$$

For the proof of Theorem \ref{strongGenGaussSingInt3} we will need Lemma 4.36 \cite{urbina2019}, see also \cite{TesSon},
\begin{lemma}\label{CZCruCEst2}
 For every $0 \leq \eta \leq 1$ and $\nu > 0,$ there exists a constant $C$ such that if  $ \langle x, y\rangle > 0$ and $|x -y| > C_d m(x),$ we have,
\begin{equation}
\int_0^1 (u(t))^{\eta/2} e^{-\nu u(t)}
\frac{dt}{t^{3/2}\sqrt{1-t}} \leq C \frac{e^{-\nu
u_0}}{t_0^{1/2}}.
\end{equation}
\end{lemma}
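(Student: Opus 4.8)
The plan is to reduce the whole estimate to one scalar inequality, built on an exact identity for $u(t)-u_{0}$. Put $s=\sqrt{1-t}$, $s_{0}=\sqrt{1-t_{0}}$ and $A:=|x|^{2}+u_{0}$. From the critical point equation $u'(s_{0})=0$ (which holds for the stated $t_{0}$) one gets $\langle x,\,y-s_{0}x\rangle=s_{0}u_{0}$, and completing the square in $|y-sx|^{2}=|(y-s_{0}x)-(s-s_{0})x|^{2}$ together with $1-s^{2}=(1-s_{0}^{2})-2s_{0}(s-s_{0})-(s-s_{0})^{2}$ gives
$$u(t)-u_{0}=\frac{(s-s_{0})^{2}A}{1-s^{2}}=\frac{(t-t_{0})^{2}A}{t\,\bigl(\sqrt{1-t}+\sqrt{1-t_{0}}\bigr)^{2}}.$$
From the explicit value of $u_{0}$ one also records $A=\tfrac12\bigl(a+\sqrt{a^{2}-b^{2}}\bigr)$, hence $A\,t_{0}=\sqrt{a^{2}-b^{2}}$; and $u_{0}^{2}\le a^{2}-b^{2}$ (since $u_{0}=\tfrac12(|y|^{2}-|x|^{2})+\tfrac12\sqrt{a^{2}-b^{2}}$ and $|y|^{2}-|x|^{2}\le\sqrt{a^{2}-b^{2}}$ by Cauchy--Schwarz). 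Finally, the hypotheses force $a^{2}-b^{2}$ bounded below: because $\langle x,y\rangle>0$ one has $|x+y|\ge\max\{|x-y|,|x|\}$, so with $m(x)=\min\{1,|x|^{-1}\}$,
$$a^{2}-b^{2}=|x-y|^{2}|x+y|^{2}\ge (C_{d}m(x))^{2}\max\{(C_{d}m(x))^{2},|x|^{2}\}\ge c_{d}>0,$$
checking the cases $|x|\le1$ and $|x|>1$; in particular $a^{2}-b^{2}\ge1$ and $A\,t_{0}=\sqrt{a^{2}-b^{2}}$ stays away from $0$.

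Next I would reduce to the ``model'' bound: for every $\mu>0$,
$$\int_{0}^{1}e^{-\mu(u(t)-u_{0})}\,\frac{dt}{t^{3/2}\sqrt{1-t}}\le\frac{C_{\mu}}{t_{0}^{1/2}(a^{2}-b^{2})^{1/4}}.$$
Granting it: since $u(t)\ge u_{0}$, write $u(t)^{\eta/2}\le u_{0}^{\eta/2}+(u(t)-u_{0})^{\eta/2}$ (subadditivity, $\eta/2\le1$); bound $u_{0}^{\eta/2}\le(a^{2}-b^{2})^{\eta/4}\le(a^{2}-b^{2})^{1/4}$ (using $\eta\le1$ and $a^{2}-b^{2}\ge1$) and $(u(t)-u_{0})^{\eta/2}e^{-\nu(u(t)-u_{0})}\le C_{\eta,\nu}e^{-\frac{\nu}{2}(u(t)-u_{0})}$; factor out $e^{-\nu u_{0}}$ and apply the model bound with $\mu=\nu$ and with $\mu=\nu/2$ to obtain
$$\int_{0}^{1}u(t)^{\eta/2}e^{-\nu u(t)}\frac{dt}{t^{3/2}\sqrt{1-t}}\le e^{-\nu u_{0}}\Bigl[(a^{2}-b^{2})^{1/4}\cdot\tfrac{C_{\nu}}{t_{0}^{1/2}(a^{2}-b^{2})^{1/4}}+\tfrac{C_{\eta,\nu}C_{\nu/2}}{t_{0}^{1/2}(a^{2}-b^{2})^{1/4}}\Bigr]\le\frac{C\,e^{-\nu u_{0}}}{t_{0}^{1/2}},$$
the last step by $a^{2}-b^{2}\ge1$. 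This is exactly where $\eta\le1$ is used: the gain $(a^{2}-b^{2})^{(\eta-1)/4}\le1$ absorbs the spurious $u_{0}^{\eta/2}$ and preserves the sharp weight $e^{-\nu u_{0}}$.

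For the model bound I would substitute $\rho:=1-\sqrt{1-t}$, so that $1-s^{2}=\rho(2-\rho)$ with $2-\rho\in[1,2]$: the weight is then of order $\rho^{-3/2}$, the minimiser is $\rho_{0}:=1-s_{0}$ (of order $t_{0}$), and $u(t)-u_{0}$ is of order $(\rho-\rho_{0})^{2}A/\rho$. Split $(0,1)$ into $E_{-}=\{\rho<\rho_{0}/2\}$, the peak $E_{0}=\{\rho_{0}/2<\rho<2\rho_{0}\}$, and $E_{+}=\{\rho>2\rho_{0}\}$. On $E_{-}$, $(\rho-\rho_{0})^{2}/\rho\ge c\rho_{0}^{2}/\rho$, so the substitution $r=\rho_{0}^{2}A/\rho$ gives a bound $\dfrac{C}{(\rho_{0}^{2}A)^{1/2}}\int_{2\rho_{0}A}^{\infty}e^{-c\mu r}r^{-1/2}\,dr\le\dfrac{C_{\mu}}{\rho_{0}A^{1/2}}$. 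On $E_{0}$ the weight is of order $\rho_{0}^{-3/2}$ and $u(t)-u_{0}$ of order $(\rho-\rho_{0})^{2}A/\rho_{0}$, so a Gaussian integral gives $C\rho_{0}^{-3/2}(\rho_{0}/A)^{1/2}=C(\rho_{0}A^{1/2})^{-1}$. On $E_{+}$, $(\rho-\rho_{0})^{2}/\rho\ge c\rho$, so $u(t)-u_{0}\ge c\rho A$ and the substitution $r=\rho A$ gives $CA^{1/2}\int_{2\rho_{0}A}^{\infty}e^{-c\mu r}r^{-3/2}\,dr$; bounding the tail by $C z^{-3/2}e^{-c\mu z}$ with $z=2\rho_{0}A$ (which is of order $\sqrt{a^{2}-b^{2}}$) yields $\le C A^{-1}\rho_{0}^{-3/2}e^{-c\mu z}$, and since $z$ is bounded below the ratio of this to $(\rho_{0}A^{1/2})^{-1}$, namely $(\rho_{0}A)^{-1/2}e^{-c\mu z}$, is at most a constant. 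Adding the three pieces and noting $(\rho_{0}A^{1/2})^{-1}=\rho_{0}^{-1/2}(A\rho_{0})^{-1/2}$ is of order $t_{0}^{-1/2}(a^{2}-b^{2})^{-1/4}$ gives the model bound. (When $2\rho_{0}\ge1$, $E_{+}$ is empty and $E_{0}$ runs up to $\rho=1$, i.e.\ $t=1$; this is harmless, since there $\rho$ is of order $1$, of order $\rho_{0}$.)

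The delicate step is $E_{+}$, i.e.\ the range $t_{0}\ll t<1$: the naive estimate $\int_{2\rho_{0}A}^{\infty}e^{-c\mu r}r^{-3/2}\,dr\le\int_{2\rho_{0}A}^{\infty}r^{-3/2}\,dr$, of order $(\rho_{0}A)^{-1/2}$, only delivers the weaker $t_{0}^{-1/2}$; the extra factor $(a^{2}-b^{2})^{-1/4}$ — which is exactly what is needed to absorb $u_{0}^{\eta/2}$ while keeping the full exponent $e^{-\nu u_{0}}$ — can only be recovered by exploiting the exponential decay together with the lower bound on $\sqrt{a^{2}-b^{2}}=A t_{0}$. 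Thus the localisation hypotheses $\langle x,y\rangle>0$ and $|x-y|>C_{d}m(x)$ enter precisely to keep $a^{2}-b^{2}$ away from $0$; without them the asserted estimate is false.
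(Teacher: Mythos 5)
The paper does not prove this lemma; it invokes Lemma 4.36 of \cite{urbina2019} as a black box, so there is no in-paper proof to compare against. Your argument is a correct, self-contained proof. The exact identity $u(t)-u_0=(s-s_0)^2A/(1-s^2)$ with $A=|x|^2+u_0=\tfrac{1}{2}(a+\sqrt{a^2-b^2})$ (so that $At_0=\sqrt{a^2-b^2}$) is verified by completing the square against the critical-point relation $\langle x,y\rangle=s_0A$; the decomposition in $\rho=1-\sqrt{1-t}$ into $E_-$, $E_0$, $E_+$ with the substitutions $r=\rho_0^2A/\rho$, Gaussian scaling on the peak, and $r=\rho A$ is sound Laplace-type bookkeeping; and you correctly isolate the crucial mechanism, namely that the spurious factor $u_0^{\eta/2}$ (absent from the right-hand side) can only be absorbed because $\langle x,y\rangle>0$ and $|x-y|>C_d\,m(x)$ force $a^2-b^2=|x-y|^2|x+y|^2\ge c_d>0$, whence the extra $(a^2-b^2)^{-1/4}$ gain in the model bound is recovered from the exponential tail on $E_+$ rather than from the crude $\int_{2\rho_0A}^\infty r^{-3/2}\,dr$, which would only give $t_0^{-1/2}$. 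One minor point: the concluding assertion that the estimate ``is false'' without the localisation hypotheses is a plausible heuristic but is not actually demonstrated; it is not needed and can be dropped.
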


Now, we are ready to prove Theorem \ref{strongGenGaussSingInt3}.

\begin{proof}
The proof follows the same scheme as the one of S. P\'erez for Theorem \ref{strongGenGaussSingInt}, see \cite{pe} (or  Theorem 9.17 of \cite{urbina2019}). As usual, we split these operators into a local and a global part,
\begin{eqnarray*}
 \overline{T}_{F,m} f(x) &=& C_d \int\limits_{|x-y|< d \,m(x)} \overline{\mathcal{K}}_{F,m} (x,y)  f(y) dy +C_d  \int\limits_{|x-y|\geq d \,m(x)} \overline{\mathcal{K}}_{F,m} (x,y)  f(y) dy  \\
&=&  \overline{T}_{F,m,L}f(x) +  \overline{T}_{F,m,G,} f(x),
\end{eqnarray*}
where
$$  \overline{T}_{F,m,L}f(x)  =   \overline{T}_{F,m}(f \chi_{B_h(\cdot)})(x) $$
 is the {\em local part}
and
$$\overline{T}_{F,m,G,} f(x)=  \overline{T}_{F,m} (f \chi_{B^c_h(\cdot)})(x) $$
 is the {\em global  part} of $ \overline{T}_{F,m},$
and
$$ B_h = B(x, C_d \,m(x)) = \{y \in \mathbb {R}^d: |y-x|< C_d \,m(x) \}$$
is an admissible ball for the Gaussian measure.

\begin{enumerate}
\item [i)] For the local part  $\overline{T}_{F,m,L},$\label{PLIntSinG} we will prove that it is always of weak type
 $(1,1).$  The needed estimates  follow from an idea that the local part differs from a Calder\'on-Zygmund singular integral by an operator that is $L^1(\gamma_d)$- bounded; in other words, the operator defined by the difference of $\overline{T}_{F,m}$ and an appropriated approximation of it (which is an operator defined as the convolution with a Calder\'on-Zygmund kernel) is $L^1(\mathbb{R}^d)$-bounded.

\begin{itemize}
\item First, observe that if $F$ satisfies the orthogonality condition  (\ref{Fort2}) and (\ref{condF3}), setting
$$ \mathcal{K}(x) = \int_0^{\infty} F \Big(- \frac{x}{t^{1/2}} \Big) e^{-|x|^2/t} \frac{dt}{t^{d/2+1}},$$
then, $\mathcal{K}$ is a Calder\'on-Zygmund kernel of convolution type (see \cite{duo}, \cite{st1} or \cite{grafak}), as the integral is absolutely convergent when $x\neq 0$. Making the change of variables $s=|x|/t^{1/2}$ we get
$$\mathcal{K}(x) := \frac{2 \int_0^{\infty} F \Big(- \frac{x}{|x|}s \Big) e^{-s^2} s^{d-1} ds}
{|x|^d} = \frac{\Omega(x)}{|x|^d},$$
with $\Omega$ homogeneous  of degree zero, and therefore $K$ is homogeneous of degree $-d.$  Moreover, $\Omega$ is $C^1$  with mean zero on $S^{d-1},$
since
\begin{eqnarray*}
 \int_ {S^{d-1}} \Omega(x') d\sigma(x') &=&  2  \int_0^{\infty} \int_ {S^{d-1}} F(-x's)   d\sigma(x') e^{-s^2} s^{d-1} ds\\
 & = & 2  \int_{\mathbb{R}^{d}} F(-y)   e^{-|y|^2} dy=0.
\end{eqnarray*}

Therefore, by the classical Calder\'on-Zygmund theory, the convolution operator defined using  convolution with the kernel $\mathcal{K},$
is continuous in  $L^p(\mathbb{R}^d), \; 1 < p < \infty$ and weak type $(1,1)$, with respect to the Lebesgue measure. Therefore, by Theorem 4.32 of \cite{urbina2019}, see also Proposition 4.3 of \cite{pe}, its local part is bounded in $L^p(\gamma_d),$ $1<p <\infty$ and of weak type $(1,1)$ with respect to $\gamma_d.$

\item Second, we need to get rid of the function $\psi_m.$ Using Lemma \ref{bound1}, we can write
\begin{eqnarray*}
\overline{\mathcal{K}}_{F,m}(x,y)
&=&\frac{1}{2}\psi_{m}(0)\int_{0}^{1}
F\left(\frac{x-\sqrt{1-t}y}{\sqrt{t}}\right)\frac{e^{-u(t)}}{t^{\frac{d}{2}+1}}dt\\
&&\hspace{-1cm}+\frac{1}{2}\int_{0}^{1}\left(\psi_{m}(t)-\psi_{m}(0)\right)
F\left(\frac{x-\sqrt{1-t}y}{\sqrt{t}}\right)\frac{e^{-u(t)}}{t^{\frac{d}{2}+1}}dt
\end{eqnarray*}
Set\begin{equation*}
\mathcal{K}_{1}(x,y):=\int_{0}^{1}F\left(\frac{x-\sqrt{1-t}y}{\sqrt{t}}\right)\frac{e^{-u(t)}}{t^{\frac{d}{2}+1}}dt.
\end{equation*}
Now, about the local part we know that $u(t) \geq |y-x|^2/t -2d,$ then, using condition (\ref{condF3}), we get
\begin{eqnarray*}
&&\left|\int_{0}^{1}\left(\psi_{m}(t)-\psi_{m}(0)\right)
F\left(\frac{x-\sqrt{1-t}y}{\sqrt{t}}\right)\frac{e^{-u(t)}}{t^{\frac{d}{2}+1}}dt\right|\\
&&\hspace{3.5cm}\leq
\int_{0}^{1}\left|\psi_{m}(t)-\psi_{m}(0)\right|
\left|F\left(\frac{x-\sqrt{1-t}y}{\sqrt{t}}\right)\right|\frac{e^{-u(t)}}{t^{\frac{d}{2}+1}}dt\\
&&\hspace{3.5cm}\leq C\int_{0}^{1}\frac{t}{\sqrt{1-t}}e^{\epsilon\frac{\left|x-\sqrt{1-t}y\right|^{2}}{t}}\frac{e^{-u(t)}}{t^{\frac{d}{2}+1}}dt\\
&&\hspace{3.5cm}=C\int_{0}^{1}\frac{e^{\epsilon
v(t)-u(t)}}{t^{\frac{d}{2}}}\frac{dt}{\sqrt{1-t}},
\end{eqnarray*}
where $v(t)=\frac{\left|x-\sqrt{1-t}y\right|^{2}}{t}$.
Observe that
$$
\epsilon v(t)-u(t)=\epsilon (v(t)-u(t))-(1-\epsilon)u(t)=\epsilon (|x|^{2}-|y|^{2})-(1-\epsilon)u(t).
$$
Then,
\begin{eqnarray*}
&&\left|\int_{0}^{1}\left(\psi_{m}(t)-\psi_{m}(0)\right)
F\left(\frac{x-\sqrt{1-t}y}{\sqrt{t}}\right)\frac{e^{-u(t)}}{t^{\frac{d}{2}+1}}dt\right|\\
&&\hspace{1.5cm}\leq Ce^{\epsilon (|x|^{2}-|y|^{2})}\int_{0}^{1}\frac{e^{-(1-\epsilon)u(t)}}{t^{\frac{d}{2}}}\frac{dt}{\sqrt{1-t}}\leq
CC_{\epsilon}\int_{0}^{1}\frac{e^{-\delta\frac{\left|x-y\right|^{2}}{t}}}{t^{\frac{d}{2}}}\frac{dt}{\sqrt{1-t}}
\end{eqnarray*}
Set
\begin{equation*}
\mathcal{K}_{2}(x):=\int_{0}^{1}\frac{e^{-\frac{\delta|x|^{2}}{t}}}{t^{\frac{d}{2}}}\frac{dt}{\sqrt{1-t}}
\end{equation*}
\item Third, we need to control the difference  between $\mathcal{K}_1$ and the Calder\'on-Zygmund kernel $\mathcal{K}.$

\underline{Claim}
\begin{equation*}
\left|\mathcal{K}_{1}(x,y)-\mathcal{K}(x-y)\right|\leq
C\frac{1+|x|^{1/2}}{|x-y|^{d-1/2}}
\end{equation*}
{\em Proof of the claim}  We need to estimate,
\begin{eqnarray*}
 && \left|\mathcal{K}_{1}(x,y)-\mathcal{K}(x-y)\right| \\
&&=\left|\int_{0}^{1}F\left(\frac{x-\sqrt{1-t}y}{\sqrt{t}}\right)\frac{e^{-u(t)}}{t^{\frac{d}{2}+1}}dt
 -\int_{0}^{\infty}F\left(\frac{y-x}{\sqrt{t}}\right)\frac{e^{-\frac{\left|x-y\right|^{2}}{t}}}{t^{\frac{d}{2}+1}}dt\right|\\
&&\leq
\left|\int_{t_{0}}^{1}F\left(\frac{x-\sqrt{1-t}y}{\sqrt{t}}\right)\frac{e^{-u(t)}}{t^{\frac{d}{2}+1}}dt
-\int_{t_{0}}^{\infty}F\left(\frac{y-x}{\sqrt{t}}\right)\frac{e^{-\frac{\left|x-y\right|^{2}}{t}}}{t^{\frac{d}{2}+1}}dt\right|\\
&&\hspace{1cm}+\left|\int_{0}^{t_{0}}F\left(\frac{x-\sqrt{1-t}y}{\sqrt{t}}\right)\frac{e^{-u(t)}}{t^{\frac{d}{2}+1}}dt
-\int_{0}^{t_{0}}F\left(\frac{y-x}{\sqrt{t}}\right)\frac{e^{-\frac{\left|x-y\right|^{2}}{t}}}{t^{\frac{d}{2}+1}}dt\right|\\
&& =(\mathrm{I})+(\mathrm{II}).
\end{eqnarray*}
Using again the notation of Proposition 4.23 of \cite{urbina2019}, and  the fact that on the local part $u(t) \geq |y-x|^2/t -2d,$ there is a $\delta >0$ such that,
\begin{eqnarray*}
(\mathrm{I})&\leq
&\int_{t_{0}}^{1}\left|F\left(\frac{x-\sqrt{1-t}y}{\sqrt{t}}\right)\right|\frac{e^{-u(t)}}{t^{\frac{d}{2}+1}}dt
+\int_{t_{0}}^{\infty}\left|F\left(\frac{y-x}{\sqrt{t}}\right)\right|\frac{e^{-\frac{\left|x-y\right|^{2}}{t}}}{t^{\frac{d}{2}+1}}dt\\
&\leq &C_{1}\int_{t_{0}}^{1}\frac{e^{\epsilon
v(t)-u(t)}}{t^{\frac{(d-1)}{2}}}\frac{dt}{t^{\frac{3}{2}}}
+C_{2}\int_{t_{0}}^{\infty}e^{(\epsilon-1)\frac{\left|x-y\right|^{2}}{t}}\frac{dt}{t^{\frac{d}{2}+1}}\\
&\leq
&C_{1}e^{\epsilon(|x|^{2}-|y|^{2})}\int_{t_{0}}^{1}\frac{e^{-(1-\epsilon)u(t)}}{t^{\frac{(d-1)}{2}}}\frac{dt}{t^{\frac{3}{2}}}
+C_{2}\int_{t_{0}}^{\infty}e^{(\epsilon-1)\frac{\left|x-y\right|^{2}}{t}}\frac{dt}{t^{\frac{d}{2}+1}}\\
&\leq
&C_{\epsilon}\int_{t_{0}}^{1}\frac{e^{-(1-\epsilon)\frac{\left|x-y\right|^{2}}{t}}}{t^{\frac{(d-1)}{2}}}\frac{dt}{t^{\frac{3}{2}}}
+C_{2}\int_{t_{0}}^{\infty}e^{(\epsilon-1)\frac{\left|x-y\right|^{2}}{t}}\frac{dt}{t^{\frac{d}{2}+1}}\\
&\leq &2C\int_{t_{0}}^{\infty}\frac{e^{-\delta\frac{\left|x-y\right|^{2}}{t}}}{t^{\frac{(d-1)}{2}}}\frac{dt}{t^{\frac{3}{2}}}\leq C\frac{1}{|x-y|^{d-1}}.\frac{1}{t_{0}^{1/2}}\leq
C\frac{1+|x|^{1/2}}{|x-y|^{d-\frac{1}{2}}}.
\end{eqnarray*}
Now, we need to bound $(\mathrm{II})$.

Set
$w(s)=x-\sqrt{1-s}y,\hspace{0.2cm}$$z(s)=y-\sqrt{1-s}x,\hspace{0.2cm}$
then $\hspace{0.2cm}w'(s)=\frac{y}{2\sqrt{1-s}}\hspace{0.2cm}$ and
$\hspace{0.2cm}z'(s)=\frac{x}{2\sqrt{1-s}}$.\\
Then,
\begin{eqnarray*}
&&\left|F\left(\frac{x-\sqrt{1-t}y}{\sqrt{t}}\right)e^{-u(t)}-F\left(\frac{x-y}{\sqrt{t}}\right)e^{-\frac{\left|x-y\right|^{2}}{t}}\right|=\left|F\left(\frac{w(t)}{\sqrt{t}}\right)e^{\frac{-|z(t)|^{2}}{t}}-F\left(\frac{w(0)}{\sqrt{t}}\right)e^{-\frac{\left|z(0)\right|^{2}}{t}}\right|\\
&& \quad =\left|\int_{0}^{t}\frac{\partial}{\partial
s}\left(F\left(\frac{w(s)}{\sqrt{t}}\right)e^{-\frac{|z(s)|^{2}}{t}}\right)ds\right|\\
&& \quad =\left|\int_{0}^{t}\left\langle\frac{w'(s)}{\sqrt{t}},\nabla
F\left(\frac{w(s)}{\sqrt{t}}\right) \right\rangle
e^{-\frac{|z(s)|^{2}}{t}}-2\left\langle
z'(s),\frac{z(s)}{t}\right\rangle
F\left(\frac{w(s)}{\sqrt{t}}\right)e^{-\frac{|z(s)|^{2}}{t}}ds\right|\\
&& \quad \leq \int_{0}^{t}\left|\frac{w'(s)}{\sqrt{t}}\right|\left|\nabla
F\left(\frac{w(s)}{\sqrt{t}}\right)\right|
e^{-\frac{|z(s)|^{2}}{t}}ds+2\int_{0}^{t}\left|\frac{z'(s)}{\sqrt{t}}\right|\left|\frac{z(s)}{\sqrt{t}}\right|
\left|F\left(\frac{w(s)}{\sqrt{t}}\right)\right|e^{-\frac{|z(s)|^{2}}{t}}ds\\
&& \quad \leq \int_{0}^{t}\frac{|y|}{\sqrt{t}2\sqrt{1-s}}e^{\epsilon'\frac{|w(s)|^{2}}{t}-\frac{|z(s)|^{2}}{t}}ds+2\int_{0}^{t}\frac{|x|}{\sqrt{t}2\sqrt{1-s}}
\left|\frac{z(s)}{\sqrt{t}}\right|e^{\epsilon'\frac{|w(s)|^{2}}{t}-\frac{|z(s)|^{2}}{t}}ds\\
&& \quad \leq \frac{|y|}{2\sqrt{t}}\int_{0}^{t}\frac{1}{\sqrt{1-s}}e^{\epsilon'\frac{s}{t}(|x|^{2}-|y|^{2})-(1-\epsilon')\frac{|z(s)|^{2}}{t}}ds+
\frac{|x|}{\sqrt{t}}\int_{0}^{t}\frac{1}{\sqrt{1-s}}
\left|\frac{z(s)}{\sqrt{t}}\right|e^{\epsilon'\frac{s}{t}(|x|^{2}-|y|^{2})-(1-\epsilon')\frac{|z(s)|^{2}}{t}}ds\\
&& \quad \leq
\frac{C|y|}{2\sqrt{t}}\int_{0}^{t}\frac{1}{\sqrt{1-s}}e^{-(1-\epsilon')\frac{|z(s)|^{2}}{t}}ds+
\frac{C|x|}{\sqrt{t}}\int_{0}^{t}\frac{1}{\sqrt{1-s}}
\left|\frac{z(s)}{\sqrt{t}}\right|e^{-\frac{(1-\epsilon')}{2}\frac{|z(s)|^{2}}{t}}e^{-\frac{(1-\epsilon')}{2}\frac{|z(s)|^{2}}{t}}ds\\
&& \quad \leq
\frac{C|y|}{2\sqrt{t}}\int_{0}^{t}\frac{1}{\sqrt{1-s}}e^{-(1-\epsilon')\frac{|z(s)|^{2}}{t}}ds+
\frac{C|x|}{\sqrt{t}}\int_{0}^{t}\frac{1}{\sqrt{1-s}}e^{-\frac{(1-\epsilon')}{2}\frac{|z(s)|^{2}}{t}}ds.\\
\end{eqnarray*}
On the other hand, in the local part, we have
\begin{eqnarray*}
\frac{|z(s)|^{2}}{t}&=&\frac{|y-\sqrt{1-s}x|^{2}}{t}=\frac{|(y-x)-(\sqrt{1-s}-1)x|^{2}}{t}\\
&\geq &\frac{\left(|(y-x)|-|(\sqrt{1-s}-1)x|\right)^{2}}{t}\\
&\geq &\frac{|(y-x)|^{2}}{t}-2\frac{|y-x||x|(\sqrt{1-s}-1)}{t}\geq
\frac{|(y-x)|^{2}}{t}-2|y-x||x|\geq
\frac{|(y-x)|^{2}}{t}-2C.\\
\end{eqnarray*}
Thus,
\begin{eqnarray*}
&&\left|F\left(\frac{x-\sqrt{1-t}y}{\sqrt{t}}\right)e^{-u(t)}-F\left(\frac{x-y}{\sqrt{t}}\right)e^{-\frac{\left|x-y\right|^{2}}{t}}\right|\\
&& \quad \leq \frac{C|y|}{2\sqrt{t}}\int_{0}^{t}\frac{1}{\sqrt{1-s}}e^{-(1-\epsilon')\frac{|y-x|^{2}}{t}}ds+
\frac{C|x|}{\sqrt{t}}\int_{0}^{t}\frac{1}{\sqrt{1-s}}e^{-\frac{(1-\epsilon')}{2}\frac{|y-x|^{2}}{t}}ds\\
&& \quad \leq \frac{C(|y|+|x|)}{\sqrt{t}}\int_{0}^{t}\frac{1}{\sqrt{1-s}}e^{-\frac{(1-\epsilon')}{2}\frac{|y-x|^{2}}{t}}ds\\
&& \quad =\frac{C(|y|+|x|)}{\sqrt{t}}e^{-\delta\frac{|y-x|^{2}}{t}}\int_{0}^{t}\frac{1}{\sqrt{1-s}}ds=\frac{C(|y|+|x|)}{\sqrt{t}}e^{-\delta\frac{|y-x|^{2}}{t}}2(1-\sqrt{1-t}),
\end{eqnarray*}
where $ \delta=\frac{(1-\epsilon')}{2}>0$.\\
 Hence,
\begin{eqnarray*}
(\mathrm{II}) &=&\int_{0}^{t_{0}}\left|F\left(\frac{x-\sqrt{1-t}y}{\sqrt{t}}\right)e^{-u(t)}-F\left(\frac{x-y}{\sqrt{t}}\right)e^{-\frac{\left|x-y\right|^{2}}{t}}\right|\frac{dt}{t^{d/2+1}}\\
&\leq
&2C(|y|+|x|)\int_{0}^{t_{0}}\frac{e^{-\delta\frac{|x-y|^{2}}{t}}}{t^{d/2}}\frac{(1-\sqrt{1-t})}{t}\frac{dt}{\sqrt{t}}\\
&\leq
&2C(|y|+|x|)\int_{0}^{t_{0}}\frac{e^{-\delta\frac{|x-y|^{2}}{t}}}{t^{d/2}}\frac{dt}{\sqrt{t}}\leq
\frac{2C(|y|+|x|)}{|x-y|^{d}}\int_{0}^{t_{0}}\frac{dt}{\sqrt{t}}=\frac{2C(|y|+|x|)}{|x-y|^{d}}t_{0}^{1/2}.\\
\end{eqnarray*}
Now, using that
 \begin{equation*}
 t_{0}=\frac{2\sqrt{a^{2}-b^{2}}}{a+\sqrt{a^{2}-b^{2}}}\leq
 \frac{2\sqrt{a^{2}-b^{2}}}{a}=\frac{2|x+y||x-y|}{|x|^{2}+|y|^{2}}
\end{equation*}
we get,
\begin{eqnarray*}
\frac{C(|y|+|x|)}{|x-y|^{d}}t_{0}^{1/2}&\leq
&\frac{C(|y|+|x|)}{|x-y|^{d}}\left(\frac{2|x+y||x-y|}{|x|^{2}+|y|^{2}}\right)^{1/2}=\frac{C(|y|+|x|)}{|x-y|^{d-\frac{1}{2}}}\left(\frac{|x+y|^{1/2}}{\left(|x|^{2}+|y|^{2}\right)^{1/2}}\right)\\
&=&\frac{C|y|}{|x-y|^{d-\frac{1}{2}}}\left(\frac{|x+y|^{1/2}}{\left(|x|^{2}+|y|^{2}\right)^{1/2}}\right)+\frac{C|x|}{|x-y|^{d-\frac{1}{2}}}\left(\frac{|x+y|^{1/2}}{\left(|x|^{2}+|y|^{2}\right)^{1/2}}\right)\\
&\leq &\frac{C|y|}{|x-y|^{d-\frac{1}{2}}}\left(\frac{|x+y|^{1/2}}{\left(|y|^{2}\right)^{1/2}}\right)+\frac{C|x|}{|x-y|^{d-\frac{1}{2}}}\left(\frac{|x+y|^{1/2}}{\left(|x|^{2}\right)^{1/2}}\right)\\
&=&\frac{2C}{|x-y|^{d-\frac{1}{2}}}\left(|x+y|^{1/2}\right)\leq \frac{2C}{|x-y|^{d-\frac{1}{2}}}\left(|x|^{1/2}+|y|^{1/2}\right).\\
\end{eqnarray*}
Also in the local part, we have
 \begin{equation*}
\left||x|-|y|\right|\leq |x-y|\leq C_{1},\; \mbox{i.e.} \;
|y|\leq C_{1} +|x|.
\end{equation*}
Then,
\begin{eqnarray*}
\frac{C(|y|+|x|)}{|x-y|^{d}}t_{0}^{1/2}&\leq &\frac{C}{|x-y|^{d-\frac{1}{2}}}\left(|x|^{1/2}+(C_{1}^{1/2}+|x|^{1/2})\right)\\
&\leq
&\frac{C|x|^{1/2}+C}{|x-y|^{d-\frac{1}{2}}}=\frac{C\left(1+|x|^{1/2}\right)}{|x-y|^{d-\frac{1}{2}}},
\end{eqnarray*}
and therefore,
\begin{equation*}
(\mathrm{II})\leq
\frac{C\left(1+|x|^{1/2}\right)}{|x-y|^{d-\frac{1}{2}}}.
\end{equation*}
Set
 \begin{equation*}
\mathcal{K}_{3}(x,y):=\frac{1+|x|^{\frac{1}{2}}}{|x-y|^{d-\frac{1}{2}}}.
\end{equation*}
Observe that $K_3(x,y)$ defines a function in the variable $x$ which is $L^1(\mathbb{R}^d),$ uniformly in the variable $y$.\\

\end{itemize}
Hence, writing $\overline{\mathcal{K}}_{F,m} (x,y)$ as
\begin{eqnarray*}
\overline{\mathcal{K}}_{F,m} (x,y) &=&  \frac{1}{2}\int_0^1 \psi_m(t) F\Big(\frac{y-\sqrt{1-t}\, x}{\sqrt{t}}\Big) \frac {e^{-u(t)}}{t^{d/2+1}} dt,\\
&=& \frac{1}{2}\psi_m(0) \int_0^1 F\Big(\frac{y-\sqrt{1-t}\, x}{\sqrt{t}}\Big) \frac {e^{-u(t)}}{t^{d/2+1}} dt \\
&& \hspace{2.5cm} + \frac{1}{2} \int_0^1 (\psi_m(t) - \psi_m(0)) F\Big(\frac{y-\sqrt{1-t}\, x}{\sqrt{t}}\Big) \frac {e^{-u(t)}}{t^{d/2+1}} dt\\
&=& \frac{1}{2}\psi_m(0)\Big[ \int_0^1 F\Big(\frac{y-\sqrt{1-t}\, x}{\sqrt{t}}\Big) \frac {e^{-u(t)}}{t^{d/2+1}}  dt - \int_0^\infty F\Big(\frac{y-x}{\sqrt{t}}\Big) \frac {e^{-\frac{|x-y|^2}t}}{t^{d/2+1}} dt \Big] \\
&&+ \frac{1}{2} \psi_m(0) \int_0^\infty  F\Big(\frac{y-x}{\sqrt{t}}\Big) \frac {e^{-\frac{|x-y|^2}t}}{t^{d/2+1}} dt \\
&& \hspace{3.5cm} +   \frac{1}{2} \int_0^1 (\psi_m(t) - \psi_m(0))
F\Big(\frac{y-\sqrt{1-t}\, x}{\sqrt{t}}\Big) \frac
{e^{-u(t)}}{t^{d/2+1}} dt.
\end{eqnarray*}
Using the estimates above, we conclude that  the local part $T_{F,m,L}$ can be bounded as
\begin{eqnarray*}
|\overline{T}_{F,m,L} f(x)| &=&  |\overline{T}_{F,m} f(\chi_{B_h(x)})(x)|  =  \Big| \int_{B_h(x)}  \overline{{\mathcal K}}_{F,m} (x,y)  f(y) \;dy \Big|\\
&\leq& C \int_{B_h(x)}  {\mathcal K}_3(x,y)  |f(y)| \;dy  + C \Big|  p.v.\; \int_{B_h(x)}  {\mathcal K}(x-y)   f(y) \;dy \Big| \\
&& \hspace{4cm} + C\int_{B_h(x)}  {\mathcal K}_2(x-y) | f(y) |\;dy \\
&=& (I) + (II) + (III).\\
\end{eqnarray*}

By Theorem 4.32 of \cite{urbina2019}, $(II)$ is bounded in $L^p(\gamma_d),$ $1<p <\infty$ and of weak type $(1,1)$ with respect to $\gamma_d.$ So it remains to prove that $(I)$ and $(III)$ are also bounded. In order to do that, we need to use a covering lemma, Lemma 4.3 of \cite{urbina2019}; taking a countable family of admissible balls $\mathcal{F}.$

Given $B \in  \mathcal{F},$ if  $x \in B$ then $B_h(x) \subset \hat{B}$, and therefore,
\begin{eqnarray*}
(I) &=&  (1+|x|^{1/2}) \sum_{k=0}^\infty \int_{2^{-(k+1)}C_d m(x) < |x-y| < 2^{-k}C_d m(x)} \frac{|f(y)| \chi_ {\hat{B}}}{|x-y|^{d-1/2}} dy\\
&\leq& C_d 2^d \mathcal{M}(f\chi_{\hat{B}})(x) (1+|x|^2) m(x)^{1/2}  \sum_{k=0}^\infty 2^{-(k+1)/2} \leq C  \mathcal{M}(f\chi_{\hat{B}})(x) (\chi_{B_h(\cdot)})(x),
\end{eqnarray*}
where $\mathcal{M}(g)$ is the classical Hardy-Littlewood maximal function of the function $g.$

On the other hand, let us consider $\varphi(y) =C_\delta
e^{-\delta|y|^2},$ where $C_\delta$ is a constant such that
$\int_{\mathbb{R}^d} \varphi(y) dy =1$. $\varphi$ is a
non-increasing radial function, and given $t>0,$ we rescale this
function as $\varphi_{\sqrt{t}} (y) = t^{-d/2} \phi(y/\sqrt{t}),$
and, since $0\leq \varphi \in L^1(\mathbb{R}^d),$
$\{\varphi_{\sqrt{t}} \}_{t>0}$ is a classical  approximation of
the identity in $\mathbb{R}^d.$ Then, since $\int_0^1
(1/\sqrt{1-t}) dt <\infty,$
\begin{eqnarray*}
(III) &=& \int_{B_h(x)}  \mathcal{K}_2(x-y) | f(y) |\;dy  =  \int_{B_h(x)}  \Big(\int_0^1 \varphi_{\sqrt{t}}(x-y) \frac{dt}{\sqrt{1-t}}\Big) |f(y)| dy\\
&\leq&  \int_{B_h(x)} \Big(\sup_{t>0}  \varphi_{\sqrt{t}}(x-y) \Big) \Big(\int_0^1\frac{dt}{\sqrt{1-t}}\Big) |f(y)| dy\\
&\leq& C \int_{B_h(x)} \Big(\sup_{t>0}  \varphi_{\sqrt{t}}(x-y) \Big) |f(y)| dy.
\end{eqnarray*}

Again, using the family  $\mathcal{F}$ if  $x \in B$ then $B_h(x) \subset \hat{B}$, then, by a similar argument as before,
\begin{eqnarray*}
(III) &=& \int_{B_h(x)}  \mathcal{K}_2(x-y) | f(y) |\;dy  \leq C \int_{\mathbb{R}^d} \Big(\sup_{t>0}  \varphi_{\sqrt{t}}(x-y) \Big)  |f(y)| \chi_{\hat{B}}(y) dy
\end{eqnarray*}
which yields, using Theorem 4 in Stein's book \cite[Chapter II \S 4.]{st1}, we get
\begin{eqnarray*}
(III) &=& \int_{B_h(x)}  \mathcal{K}_2(x-y) | f(y) |\;dy  \leq  \sum_{B \in \mathcal{F}}  \sup_{t>0}  \Big| (\varphi_{\sqrt{t}} *  |f \chi_{\hat{B}}|)(x) \Big| \chi_{B}(x)\\
&\leq & \sum_{B \in \mathcal{F}} \mathcal{M}(f \chi_{\hat{B}})(x) \chi_{B}(x).
\end{eqnarray*}

Therefore,  the local part $T_{F,m,L}$ is bounded in $L^p(\gamma_d),$ $1<p <\infty$ and of weak type $(1,1)$ with respect to $\gamma_d.$\\

\item[ii)] Now, for the global part $\overline{T}_{F,m, G},$\label{PGIntSinG} we will prove that it  is $L^p(\gamma_d)$-bounded for all  $1 < p < \infty.$ The idea will be to exploit the size of the kernel and treat $\overline{T}_{F,m, G}$ as a positive operator.

Observe that, from Lemma \ref{bound1},
$$\displaystyle|\psi_{m}(t)|\leq \frac{C}{\sqrt{1-t}}.$$
Hence, using (\ref{condF3}) and $v(t)=\frac{|x-\sqrt{1-t}y|^{2}}{t},$ we get
\begin{eqnarray*}
 \overline{{\mathcal K}}_{F,m} (x,y) | &\leq& C \int_0^1 \Big| F\Big(\frac{x-\sqrt{1-t}\, y}{\sqrt{t}}\Big)  \Big| \frac {e^{-u(t)}}{t^{d/2+1}} \frac{dt}{\sqrt{1-t}}\\
 & \leq&  C_\epsilon \int_0^1 e^{\epsilon v(t)}  \frac {e^{-u(t)}}{t^{d/2+1}} \frac{dt}{\sqrt{1-t}},
\end{eqnarray*}
for some $\epsilon>0$ to be determined.\\

 Let us take $E_{x}=\{y:\langle x,y\rangle> 0\}$ and consider two cases:

\begin{itemize}
\item Case $\#$1:  $b = 2\langle x, y\rangle \leq 0.$
Now, as $v(t)=\frac{a}{t}-\frac{\sqrt{1-t}b}{t}-|y|^{2},$
$$\frac{a}{t}- |y|^2 \leq v(t)= \frac{a}{t} - \frac{\sqrt{1-t}}{t} b -|y|^2 \leq \frac{2a}{t},$$
and so, the change of variables $s= a(\frac{1}{t}-1)$ gives
\begin{eqnarray*}
 \int_{0}^{1}\frac{e^{\epsilon
 v(t)-u(t)}}{t^{\frac{d}{2}+1}}\frac{dt}{\sqrt{1-t}}&=&e^{|x|^{2}-|y|^{2}}
 \int_{0}^{1}\frac{e^{-(1-\epsilon)
 v(t)}}{t^{\frac{ d}{2}+1}}\frac{dt}{\sqrt{1-t}}\\
 &\leq &e^{|x|^{2}-|y|^{2}}
 \int_{0}^{1}\frac{e^{-(1-\epsilon)(\frac{a}{t}-|y|^{2})
 }}{t^{\frac{d}{2}+1}}\frac{dt}{\sqrt{1-t}}\\
&= & e^{|x|^{2}-|y|^{2}+|y|^{2}-\epsilon|y|^{2}}
\int_{0}^{1}\frac{e^{-(1-\epsilon)(\frac{a}{t})
 }}{t^{\frac{d}{2}+1}}\frac{dt}{\sqrt{1-t}}\\
  &= &\frac{e^{|x|^{2}-|y|^{2}+|y|^{2}-\epsilon|y|^{2}}}{a^{\frac{d}{2}}}
 \int_{0}^{\infty}e^{-(1-\epsilon)
 (s+a)}(s+a)^{\frac{d}{2}-\frac{1}{2}}\frac{ds}{s^{\frac{1}{2}}}\\
 &= &\frac{e^{-|y|^{2}+\epsilon|x|^{2}}}{a^{\frac{d}{2}}}
 \int_{0}^{\infty}e^{-(1-\epsilon)s
 }(s+a)^{\frac{d}{2}-\frac{1}{2}}\frac{ds}{s^{\frac{1}{2}}}\\
&\leq &\frac{e^{-|y|^{2}+\epsilon|x|^{2}}}{a^{\frac{d}{2}}} C
\left(\int_{0}^{\infty}e^{-(1-\epsilon)s
 }s^{\frac{d}{2}-1}ds+a^{\frac{d}{2}-\frac{1}{2}}\int_{0}^{\infty}e^{-(1-\epsilon)s
 }s^{\frac{1}{2}-1}ds\right)\\
 &=&C_{\epsilon} \frac{e^{-|y|^{2}+\epsilon|x|^{2}}}{a^{\frac{d}{2}}}
\left(\Gamma(\frac{d}{2})+a^{\frac{d}{2}-\frac{1}{2}}\Gamma(\frac{1}{2})\right)=C_{\epsilon}e^{-|y|^{2}+\epsilon|x|^{2}}
\left(\frac{1}{a^{\frac{d}{2}}}+\frac{1}{a^{\frac{1}{2}}}\right)\\
&\leq &C_{\epsilon}e^{-|y|^{2}+\epsilon|x|^{2}},
 \end{eqnarray*}
 as $a>\frac{1}{2}$ over the global region. Thus,
\begin{eqnarray*}
\left|\overline{\mathcal{K}}_{F,m}(x,y)\right|&\leq
&\int_{0}^{1}\left|F\left(\frac{x-\sqrt{1-t}y}{\sqrt{t}}\right)\right|\frac{e^{-u(t)}}{t^{\frac{d}{2}+1}}\frac{dt}{\sqrt{1-t}}\\
&\leq
&C_{\epsilon}\int_{0}^{1}\frac{e^{\epsilon v(t)-u(t)}}{t^{\frac{d}{2}+1}}\frac{dt}{\sqrt{1-t}}\leq  C_{\epsilon}e^{-|y|^{2}+\epsilon|x|^{2}}.
\end{eqnarray*}
Therefore,
\begin{eqnarray*}
\int_{\mathbb{R}^{d}}\left(\int_{B_{h}^{c}(x)\cap
E_{x}^{c}}\left|\overline{\mathcal{K}}_{F,m}(x,y)\right|\left|f(y)\right|dy\right)^{p}e^{-|x|^{2}}dx&\leq
&C_{\epsilon}\int_{\mathbb{R}^{d}}\left(\int_{B_{h}^{c}(x)\cap
E_{x}^{c}}e^{-|y|^{2}+\epsilon|x|^{2}}\left|f(y)\right|dy\right)^{p}e^{-|x|^{2}}dx\\
&\leq
&C_{\epsilon}\int_{\mathbb{R}^{d}}\left(\int_{B_{h}^{c}(x)}\left|f(y)\right|e^{-|y|^{2}}dy\right)^{p}e^{\epsilon|x|^{2}p-|x|^{2}}dx\\
&\leq
&C_{\epsilon}\int_{\mathbb{R}^{d}}\left(\int_{\mathbb{R}^{d}}\left|f(y)\right|^{p}\gamma(dy)\right)e^{(\epsilon p-1)|x|^{2}}dx\\
&=&C_{\epsilon}\left\|f\right\|^{p}_{p,\gamma}\int_{\mathbb{R}^{d}}e^{(\epsilon
p-1)|x|^{2}}dx =C_{\epsilon}\left\|f\right\|^{p}_{p,\gamma},
\end{eqnarray*}
for $\epsilon<1/p.$\\

\item Case $\#$2:  $b = 2\langle x, y\rangle > 0$. Consider again
$$u_0 =u(t_0)=\frac{|y|^2-|x|^2}{2}+\frac{\sqrt{a^2-b^2}}{2} \leq (a^2-b^2)^{1/2}.$$
Let $y \in B_{h}^{c}(x),$
\begin{eqnarray*}
|\overline{K}_{F,m}(x,y)|&\leq&C\int_{0}^{1}\left|F\left(\frac{x-\sqrt{1-t}y}{\sqrt{t}}\right)\right|\frac{e^{-u(t)}}{t^{\frac{d}{2}+1}}\frac{dt}{\sqrt{1-t}}\\
&\leq& C\int_{0}^{1}\frac{e^{\frac{\epsilon|x-\sqrt{1-t}y|^{2}}{t}-u(t)}}{t^{\frac{d}{2}+1}}\frac{dt}{\sqrt{1-t}}=C\int_{0}^{1}\frac{e^{\epsilon v(t)-u(t)}}{t^{\frac{d}{2}+1}}\frac{dt}{\sqrt{1-t}}\\
&=&Ce^{\epsilon(|x|^{2}-|y|^{2})}\int_{0}^{1}\frac{e^{-(1-\epsilon)u(t)}}{t^{\frac{d-1}{2}}}\frac{dt}{t^{\frac{3}{2}}\sqrt{1-t}}\\
&=&Ce^{\epsilon(|x|^{2}-|y|^{2})}\int_{0}^{1}\frac{e^{-(\frac{d-1}{d})u(t)}}{t^{\frac{d-1}{2}}}\frac{e^{\epsilon u(t)-\frac{u(t)}{d}}dt}{t^{\frac{3}{2}}\sqrt{1-t}}
\end{eqnarray*}
Now, we know that
\begin{eqnarray*}
\frac{e^{-(\frac{d-1}{d})u(t)}}{t^{\frac{d-1}{2}}}&=&\left(\frac{e^{-u(t)}}{t^{\frac{d}{2}}}\right)^{\frac{d-1}{d}}\leq\left(\frac{e^{-u(t_{0})}}{t_{0}^{\frac{d}{2}}}\right)^{\frac{d-1}{d}}=\frac{e^{-(\frac{d-1}{d})u(t_{0})}}{t_{0}^{\frac{d-1}{2}}}
\end{eqnarray*}
Then, by  Lemma \ref{CZCruCEst2}, taking $v=\frac{1}{d}-\epsilon>0$
\begin{eqnarray*}
|\overline{K}_{F,m}(x,y)|&\leq&Ce^{\epsilon(|x|^{2}-|y|^{2})}\frac{e^{-(\frac{d-1}{d})u(t_{0})}}{t_{0}^{\frac{d-1}{2}}}\int_{0}^{1}\frac{e^{(\epsilon -\frac{1}{d})u(t)}dt}{t^{\frac{3}{2}}\sqrt{1-t}}\\
&\leq&Ce^{\epsilon(|x|^{2}-|y|^{2})}\frac{e^{-(\frac{d-1}{d})u(t_{0})}}{t_{0}^{\frac{d-1}{2}}}\frac{e^{(\epsilon -\frac{1}{d})u(t_{0})}}{t_{0}^{\frac{1}{2}}}=Ce^{\epsilon(|x|^{2}-|y|^{2})}\frac{e^{-(1-\epsilon)u(t_{0})}}{t_{0}^{\frac{d}{2}}}.
\end{eqnarray*}
Thus,
\begin{eqnarray*}
&&\int_{\mathbb{R}^{d}}\left(\int_{B_{h}^{c}(x)\cap E_{x}}|\overline{K}_{F,m}(x,y)f(y)|dy\right)^{p}e^{-|x|^{2}}dx\\
&&\hspace{2cm} \leq C\int_{\mathbb{R}^{d}}\left(\int_{B_{h}^{c}(x)\cap E_{x}}e^{\epsilon(|x|^{2}-|y|^{2})}\frac{e^{-(1-\epsilon)u(t_{0})}}{t_{0}^{\frac{d}{2}}}|f(y)|dy\right)^{p}e^{-|x|^{2}}dx\\
&&\hspace{2cm} =C\int_{\mathbb{R}^{d}}\left(\int_{B_{h}^{c}(x)\cap E_{x}}e^{(\epsilon-\frac{1}{p})(|x|^{2}-|y|^{2})}\frac{e^{-(1-\epsilon)u(t_{0})}}{t_{0}^{\frac{d}{2}}}|f(y)|e^{-\frac{|y|^{2}}{p}}dy\right)^{p}dx.
\end{eqnarray*}
Therefore, it is enough to check that the operator defined using the kernel,
 $$\widetilde{K}(x,y)=\displaystyle e^{(\epsilon-\frac{1}{p})(|x|^{2}-|y|^{2})}\frac{e^{-(1-\epsilon)u(t_{0})}}{t_{0}^{\frac{d}{2}}}\chi_{B_{h}^{c}(x)}(y),$$
is of strong type p with respect to the Lebesgue measure. Using the inequality  $||y|^2-|x|^2| \leq |x+y||x-y|,$ definition of $t_{0}$
and that, as $b>0,$ then on the global region, $|x+y||x-y| \geq d,$ we conclude that
\begin{eqnarray*}
\displaystyle e^{(\epsilon-\frac{1}{p})(|x|^{2}-|y|^{2})}\frac{e^{-(1-\epsilon)u(t_{0})}}{t_{0}^{\frac{d}{2}}}&=&\frac{1}{t_{0}^{\frac{d}{2}}}e^{\left[(\frac{1}{p}-\epsilon)-\frac{(1-\epsilon)}{2}\right](|y|^{2}-|x|^{2})}e^{-\frac{(1-\epsilon)}{2}|x+y||x-y|}\\
&\leq&\frac{1}{t_{0}^{\frac{d}{2}}}e^{\left|(\frac{1}{p}-\epsilon)-\frac{(1-\epsilon)}{2}\right|\left||y|^{2}-|x|^{2}\right|}e^{-\frac{(1-\epsilon)}{2}|x+y||x-y|}\\
&\leq&\frac{1}{t_{0}^{\frac{d}{2}}}e^{\left|(\frac{1}{p}-\epsilon)-\frac{(1-\epsilon)}{2}\right||x+y||x-y|-\frac{(1-\epsilon)}{2}|x+y||x-y|}\\
&=&\frac{1}{t_{0}^{\frac{d}{2}}}e^{-\alpha_{p}|x+y||x-y|}\leq C|x+y|^{d}e^{-\alpha_{p}|x+y||x-y|},
\end{eqnarray*}
where
$$\alpha_{p}=\frac{(1-\epsilon)}{2}-|(\frac{1}{p}-\epsilon)-\frac{(1-\epsilon)}{2}|.$$
Now, as $p>1$, taking $\epsilon<\frac{1}{p}$ we get that que $\alpha_{p}>0$.\\
Observe that the last expression is symmetric in $x$ and $y$ and, therefore, it suffices to prove its integrability with respect to one of them
\begin{eqnarray*}
\int_{\mathbb{R}^d} |x+y|^d e^{-\alpha_p |x+y||x-y|} dy &\leq& C + C \int_{|x-y|<1} |x|^d e^{-\alpha_p |x| |x-y|} dy \\
 && +  C \int_{|x-y|<1} |x+y |^d e^{-\alpha_p |x+y|} dy \\
&&  \leq C \int_{\mathbb{R}^d} e^{\alpha_p|v|} dv + C_d \int_0^\infty r^{2d-1} e^{-\alpha_p r} dr \leq C.
 \end{eqnarray*}
Observe that, once $p>1$ is chosen, then the operator defined using the kernel $\widetilde{K}(x,y)$ is in fact $L^q(\mathbb{R}^d)$-bounded for $1\leq q\leq \infty,$ but for the proof of the theorem it is enough the case $p=q$.
\end{itemize}
\end{enumerate}

\end{proof}

Now we will prove Theorem \ref{altrieszteo}, following the proof of Theorem 1.2 in \cite{aimarforzaniscot}, (see also Theorem 9.17 of \cite{urbina2019})

\begin{proof}

As usual, for each $x\in\mathbb{R}^d$, we write this operator as the sum of two
operators which are obtained by
splitting $\mathbb{R}^d$ into a local region,
$$B_h(x)= \{y\in \mathbb{R}^d: |y-x| < C_d m(x)\},$$ an admissible ball and its
complement $B^c_h(x)$ called the global region.  Thus,

\begin{eqnarray*}
 \overline{T}_{F,m} f(x) &=& C_d \int\limits_{|x-y|< d \,m(x)} \overline{\mathcal{K}}_{F,m} (x,y)  f(y) dy +C_d  \int\limits_{|x-y|\geq d \,m(x)} \overline{\mathcal{K}}_{F,m} (x,y)  |f(y)|dy  \\
&=&  \overline{T}_{F,m,L}f(x) +  \overline{T}_{F,m,G} f(x),
\end{eqnarray*}
where as before
$$  \overline{T}_{F,m,L}f(x)  =   \overline{T}_{F,m}(f \chi_{B_h(\cdot)})(x) $$
 is the {\em local part}
and
$$\overline{T}_{F,m,G} f(x)=  \overline{T}_{F,m} (f \chi_{B^c_h(\cdot)})(x) $$ is the {\em global  part} of $ \overline{T}_{F,m}.$

We will prove that these two operators are $\gamma_d$-weak
type $(1,1)$ and so will be $\overline{T}_{F,m}$.

In order to prove that $ \overline{T}_{F,m,L}f(x)$ is
$\gamma$-weak type $(1,1),$ we will apply Theorem 4.30 of \cite{urbina2019}. In our case,
 $$Tf(x) = \displaystyle p.v. \int_{\mathbb{R}^d} \mathcal{K}(x,y) f(y)dy $$ with
\begin{eqnarray*}
\mathcal{K}(x,y)&= &e^{|x|^2}\overline{\mathcal{K}}_{F,m}(x,y)e^{-|y|^2}\\
& =& C \int_0^1\left(\frac{-\log
r}{1-r^2}\right)^{\frac{m-2}{2}}r^{d-1} F\left(\frac{x-ry}{\sqrt{1-r^2}}\right)
\frac{e^{-\frac{|y-rx|^2}{1-r^2}}}{(1-r^2)^{\frac{d}{2}+1}}\, dr
\end{eqnarray*}
and, therefore,
\begin{eqnarray*}
\frac{\partial\mathcal{K}}{\partial y_j}(x,y)&=&2 C \int_0^1\left(\frac{-\log r}{1-r^2}\right)^{\frac{|m-2}{2}} r^{d-1}\\
 &&\hspace{-0.5cm} \times  \left[\frac{-r}{\sqrt{1-r^2}}
\frac{\partial F}{\partial y_j}\left(\frac{x-ry}{\sqrt{1-r^2}}\right)+ F\left(\frac{x-ry}{\sqrt{1-r^2}}\right)\;\frac{y_j-rx_j}{1-r^2}\right]
\frac{e^{-\frac{|y-rx|^2}{1-r^2}}}{(1-r^2)^{\frac{d}{2}+1}}\, dr.
\end{eqnarray*}

Now, we show that the hypotheses of
Theorem 4.30 of  \cite{urbina2019} are fulfilled for this operator. Thus, we prove that, in the local region $B_h(x),$ we have,
$$|\mathcal{K}(x,y)|\le \frac{C}{|x-y|^d}$$
 and
$$\left|\frac{\partial\mathcal{K}}{\partial y_j}(x,y)\right|\le \frac{C}{|x-y|^{d+1}}.$$

 There exists a constant $C>0$ such that for every $ y \in B_h(x)$
$C^{-1}\le e^{|y|^2-|x|^2}\le C$, then
$$|{\mathcal K}(x,y)|\le C |e^{-|x|^2+|y|^2}{\mathcal K}(x,y)|=C
|\overline{\mathcal{K}}_{F,m}(x,y)|$$ and
$$ \left|\frac{\partial {\mathcal K}}{\partial y_j}(x,y)\right|\le C\left|e^{-|x|^2+|y|^2}
\frac{\partial\overline{\mathcal{K}}_{F,m}}{\partial y_j}(x,y)\right|.$$
On the other hand, on $B_h(x)$, we have
$$e^{-c\frac{|y-rx|^2}{1-r^2}}= e^{-c\frac{|x-y|^2}{1-r^2}}e^{-c\frac{1-r}{1+r}|x|^2}e^{-c\frac{(x-y)\cdot
x}{1-r} }\leq C e^{-c\frac{|x-y|^2}{1-r}},$$
thus by  this
inequality and using the hypothesis on $F$, (\ref{condF3}), we have
 \begin{eqnarray*}
\left| F\left(\frac{x-ry}{\sqrt{1-r^2}}\right) \right| e^{-\frac{|y-rx|^2}{1-r^2}}\leq C_\epsilon\; e^{ \epsilon \frac{|x-ry|^2}{1-r^2}} e^{-\frac{|y-rx|^2}{1-r^2}} \leq C e^{-c\frac{|x-y|^2}{1-r}}
\end{eqnarray*}
and
\begin{eqnarray*}
\left|\nabla F\left(\frac{x-ry}{\sqrt{1-r^2}}\right)\right| e^{-\frac{|y-rx|^2}{1-r^2}} \leq C'_\epsilon \; e^{\epsilon \frac{|x-ry|^2}{1-r^2}} e^{-\frac{ |y-rx|^2}{1-r^2}} \leq C e^{-c\frac{|x-y|^2}{1-r}}
\end{eqnarray*}

\begin{eqnarray*}
|\mathcal{K}(x,y)|&\leq & C \int_0^1\left(\frac{-\log
r}{1-r^2}\right)^{\frac{m-2}{2}}\frac{e^{-c\frac{|x-y|^2}{1-r}}}{(1-r)^{\frac{d}{2}+1}}\,
dr\\
&\leq & C\left[\int_0^{\frac{1}{2}}(-\log
r)^{\frac{m-2}{2}}\, dr +
\int_{\frac{1}{2}}^1\frac{e^{-c\frac{|x-y|^2}{1-r}}}{(1-r)^{\frac{d}{2}+1}}\,
dr\right]\leq  C\left(1+\frac{1}{|x-y|^d}\right)\leq \frac{C}{|x-y|^d}
\end{eqnarray*}
and

\begin{eqnarray*}
\left|\frac{\partial\mathcal{K}}{\partial y_j}(x,y)\right| &\leq &
C\int_0^1\left(\frac{-\log r}{1-r^2}\right)^{\frac{m-2}{2}}\frac{e^{-c\frac{|x-y|^2}{1-r}}}{(1-r)^{\frac{n+3}{2}}}\,
dr\\&\leq & C\left[\int_0^{\frac{1}{2}}(-\log
r)^{\frac{|m-2}{2}}\, dr +
\int_{\frac{1}{2}}^1\frac{e^{-c\frac{|x-y|^2}{1-r}}}{(1-r)^{\frac{n+3}{2}}}\,
dr\right]\\
 &\leq & C\left(1+\frac{1}{|x-y|^{d+1}}\right)\leq \frac{C}{|x-y|^{d+1}}.
\end{eqnarray*}

From Theorem 1.2 we know that the operator $ \overline{T}_{F,m,}$ is bounded on $L^p(\gamma_d)$ for any $p>1$.
Therefore, 
$\gamma_d$-weak type $(1,1)$ of $\overline{T}_{F,m,L}$
follows, using Theorem 4.30 of \cite{urbina2019}.\\

In order to prove that $\overline{T}_{F,m,G}$ is also
$\gamma_d$-weak type $(1,1)$ we  use Forzani's generalized Gaussian maximal function,
\begin{equation}
 \mathcal{M}_{\Phi} f(x) = \sup_{0<r<1} \frac{1}{\gamma_d\left( (1+\delta)B\Big(\frac{x}{r},\frac{|x|}{r} 
(1-r)\Big)\right)} \int_{\mathbb {R}^d} \Phi\left(\frac{|x-ry|}{\sqrt{1-r^2}}\right) |f(y)| \, \gamma_d(dy), 
\end{equation}
where $\delta = \delta_{r,x} = \frac{r}{|x|(1-r)}\min\left\{\frac{1}{|x|}, \sqrt{1-r}\right\},$ see Definition 4.17 of \cite{urbina2019}, and prove that
on $\mathbb{R}^d \setminus B_h(x)$,
\begin{equation}\label{ControlGlobal}
|\overline{T}_{F,m,G} f(x)|\leq C \mathcal{M}_{\Phi}f(x),
\end{equation}
with
$\Phi(t)=e^{-c t^2}.$

\begin{eqnarray*}
|\overline{\mathcal{K}}_{F,m}(x,y)|&=&\left|\int_0^1\left(\frac{-\log
r}{1-r^2}\right)^{\frac{m-2}{2}}r^{d-1} F\left(\frac{x-ry}{\sqrt{1-r^2}}\right)
\frac{e^{-\frac{|x-ry|^2}{1-r^2}}}{(1-r^2)^{\frac{n}{2}+1}}\, dr\right|\\
&\leq &C\int_0^{\frac{3}{4}}(-\log
r)^{\frac{m-2}{2}}\frac{e^{-\frac{|x-ry|^2}{2(1-r^2)}}}{(1-r^2)^{\frac{n}{2}}}
\,
dr \\
&  &\hspace{1.5cm} +C\int_{\frac{3}{4}}^{1-\zeta/{|x|^2}}
\frac{e^{-\frac{|x-ry|^2}{2(1-r^2)}}}{(1-r^2)^{\frac{n-1}{2}}}
\ (|x|\vee (1-r^2)^{-\frac{1}{2}})\ \frac{dr}{|x|(1-r^2)^{3/2}}\\
& &\hspace{1.5cm} + C \int_{1-\zeta/{|x|^2}}^1
\frac{e^{-c\frac{|x-ry|^2}{1-r^2}}}{(1-r^2)^{\frac{n-1}{2}}}\
(|x|\vee (1-r^2)^{-\frac{1}{2}})\
\frac{e^{-\overline{c}\frac{|x-y|^2}{1-r}}}{1-r}\ dr.
\end{eqnarray*}
Hence,
$$ |\overline{\mathcal{K}}_{F,m}(x,y)|=C\left(\overline{\mathcal{K}}_{F,m}^1(x,y)
+\overline{\mathcal{K}}_{F,m}^2 (x,y)+\overline{\mathcal{K}}_{F,m}^3(x,y)\right),$$
where the inequality is obtained by annihilating the Hermite
polynomial with part of the exponential, then splitting the unit
interval of the integral into three subintervals $[0,3/4]$,
$[3/4,1-\zeta/{|x|^2}]$, and $[1-\zeta/{|x|^2},1]$ and taking into
account that on the second one $|x|\vee (1-r^2)^{-1/2}\geq |x|$, on
the third one $|x|\vee (1-r^2)^{-1/2}\ge (1-r^2)^{-1/2}$ and
$|x-ry|\ge \bar{c}|x-y|,$ and on the last two intervals  the function $-\log
r/(1-r^2)$ is bounded by a constant.

Thus, by using the definition of kernels
$\overline{\mathcal{K}}^j_{F,m}, \; j=1,2,3$; using Fubini's theorem to interchange the order
of integration on each operator ${ \overline{T}^j_{F, m, G}},\; j=1,2,3$, using the inequality
\begin{equation}
 \gamma_d \left( B \left( \frac{x}{ r} , \frac{ |x|}{ r} \, s  \right) 
\right) \le 
C  \; s^{(d-1)/2} \; \exp \left( - \frac{ |x|^2}{r^2} (1-s)^2 \right) \frac{1}{ |x|}.
\end{equation}
see Proposition 1.7 of \cite{urbina2019}, and using the definition of $\mathcal{M}_{\Phi}f$ with $\Phi (t)=e^{-ct^2}$, we get, 
\begin{eqnarray*}
{ \overline{T}^1_{F, m, G}}f(x) &=& e^{|x|^2}\int_{\mathbb{R}^d}\int_0^{\frac{3}{4}}(-\log
r)^{\frac{m-2}{2}}\frac{e^{-\frac{|x-ry|^2}{2(1-r^2)}}}{(1-r^2)^{\frac{n}{2}}}
\, dr \ |f(y)|\ \gamma_d(dy)\\
&=& \int_0^{\frac{3}{4}}(-\log
r)^{\frac{m-2}{2}}e^{|x|^2}\int_{\mathbb{R}^d}
\frac{e^{-\frac{|x-ry|^2}{2(1-r^2)}}}{(1-r^2)^{\frac{n}{2}}} |f(y)| \gamma_d(dy)\, dr\\
& \le & C \int_0^{\frac{3}{4}}(-\log
r)^{\frac{m|-2}{2}}dr \ \;\;  \mathcal {M}_{\Phi}f(x) \le  C\ \mathcal {M}_{\Phi}f(x),
\end{eqnarray*}
\begin{eqnarray*}
{ \overline{T}^2_{F, m, G}}f(x)
&=&e^{|x|^2}\int_{\mathbb{R}^d} \int_{\frac{3}{4}}^{1-\zeta/{|x|^2}}
\frac{e^{-\frac{|x-ry|^2}{2(1-r^2)}}}{(1-r^2)^{\frac{n-1}{2}}} \
(|x|\vee (1-r^2)^{-\frac{1}{2}})\ \frac{dr}{|x|(1-r^2)^{3/2}} |f(y)|\ \gamma_d(dy)\\
&=&\int_{3/4}^{1-\zeta/|x|^2}e^{|x|^2}\int_{\mathbb{R}^d}
\frac{e^{-c\frac{|x-ry|^2}{(1-r^2)}}}{(1-r^2)^{(n-1)/2}}(|x|\vee
(1-r^2)^{-1/2} )|f(y)|\gamma_d(dy) \frac{dr}{|x|(1-r^2)^{3/2}}\\
& \le& C \frac{1}{|x|}\int_{3/4}^{1-\zeta/|x|^2}\frac{ dr}{(1-r)^{3/2}}\;\;
\mathcal{ M}_{\Phi}f(x) \le C \mathcal{ M}_{\Phi}f(x),
\end{eqnarray*}
and, finally,
\begin{eqnarray*}
{ \overline{T}^3_{F, m, G}}f(x) &=&e^{|x|^2}\int_{\mathbb{R}^d} \int_{1-\zeta/{|x|^2}}^1
\frac{e^{-c\frac{|x-ry|^2}{1-r^2}}}{(1-r^2)^{\frac{n-1}{2}}}
(|x|\vee (1-r^2)^{-\frac{1}{2}})
\frac{e^{-\bar{c}\frac{|x-y|^2}{1-r}}}{1-r}\ dr\ |f(y)|\ \gamma_d(dy)\\
&=&\int_{1-\zeta/|x|^2}^{1}e^{|x|^2}\int_{\mathbb{R}^d}
\frac{e^{-c\frac{|x-ry|^2}{(1-r^2)}}}{(1-r^2)^{(n-1)/2}}(|x|\vee
(1-r^2)^{-1/2} ) \frac{e^{-\bar{c}\frac{|x-y|^2}{1-r}}}{1-r} \ |f(y)|\ \gamma_d(dy)\
dr\\& \le& \int_{1-\zeta/|x|^2}^{1}e^{|x|^2}\int_{\mathbb{R}^d}
\frac{e^{-c\frac{|x-ry|^2}{(1-r^2)}}}{(1-r^2)^{(n-1)/2}}(|x|\vee
(1-r^2)^{-1/2} )  \frac{1}{|x-y|^2}\ |f(y)| \gamma_d(dy)\ dr\\& \le& C |x|^2\int_{1-\zeta/|x|^2}^{1}dr\;\;
\mathcal{M}_{\Phi}f(x)\le C \mathcal{M}_{\Phi}f(x).
\end{eqnarray*}
So, since 
$$|\overline{T}_{F,m,G}f(x)|\le C
\sum_{j=1}^3 \overline{T}_{F,m, G}^j f(x),$$
 (\ref{ControlGlobal}) follows. Then, using Theorem 4.18 of \cite{urbina2019} (see also Theorem 1.1 of \cite{aimarforzaniscot}) we get the $\gamma_d$-weak type $(1,1)$
inequality for $\overline{T}_{F,m,G}.$\end{proof}

In an forthcoming paper \cite{NavPinUr}, following \cite{DalSco}, we prove that the general alternative Gaussian singular integrals $ \overline{T}_{F,m} $ are also  continuous on  Gaussian variable Lebesgue spaces under a condition of regularity on $p(\cdot).$\\

\bibliographystyle{amsplain}

\end{document}